\documentclass[12pt]{amsart}
\usepackage{fullpage,verbatim,amssymb}
\usepackage{hyperref}
\addtolength{\textheight}{9pt}
\makeatletter
\let\@@pmod\pmod
\DeclareRobustCommand{\pmod}{\@ifstar\@pmods\@@pmod}
\def\@pmods#1{\mkern4mu({\operator@font mod}\mkern 6mu#1)}
\makeatother

\newcommand{\Z}{\mathbb{Z}}
\newcommand{\Q}{\mathbb{Q}}
\newcommand{\R}{\mathbb{R}}
\newcommand{\C}{\mathbb{C}}
\newcommand{\HH}{\mathbb{H}}
\newcommand{\DD}{\mathcal{D}}
\newcommand{\hh}{\mathbf{h}^+}
\newcommand{\A}{\mathcal{A}}
\newcommand{\Anew}{\mathcal{A}^\mathrm{new}}
\newcommand{\rr}{\mathfrak{r}}
\newcommand{\Beta}{\mathrm{B}}
\newcommand{\IN}{\mathcal{I}_N}

\DeclareMathOperator{\Sym}{Sym}
\DeclareMathOperator{\ord}{ord}
\DeclareMathOperator{\Tr}{Tr}
\DeclareMathOperator{\sgn}{sgn}
\newtheorem{theorem}{Theorem}
\newtheorem{lemma}[theorem]{Lemma}
\newtheorem{proposition}[theorem]{Proposition}
\newtheorem{corollary}[theorem]{Corollary}
\theoremstyle{remark}
\newtheorem*{remark}{Remark}
\newtheorem*{remarks}{Remarks}
\numberwithin{theorem}{section}
\numberwithin{equation}{section}

\begin{document}
\title{The Selberg trace formula as a Dirichlet series}
\author{Andrew R. Booker \and Min Lee}
\thanks{Both authors were supported by EPSRC Grants {\tt EP/H005188/1},
{\tt EP/L001454/1} and {\tt EP/K034383/1}.}
\address{
Howard House\\
University of Bristol\\
Queens Ave\\
Bristol\\
BS8 1SN\\
United Kingdom
}
\email{\tt andrew.booker@bristol.ac.uk\\min.lee@bristol.ac.uk}

\begin{abstract}
We explore an idea of Conrey and Li of expressing the Selberg trace
formula as a Dirichlet series. We describe two applications, including
an interpretation of the Selberg eigenvalue conjecture in terms of
quadratic twists of certain Dirichlet series, and a formula for an
arithmetically weighted sum of the complete symmetric square $L$-functions
associated to cuspidal Maass newforms of squarefree level $N>1$.
\end{abstract}
\maketitle

\section{Introduction}
In this paper, we explore the idea of Conrey and Li \cite{conrey-li}
(later generalized by Li in \cite{li2}) of presenting the Selberg trace
formula for Hecke operators acting on $L^2(\Gamma_0(N)\backslash\HH)$,
$N$ squarefree, as a Dirichlet series. We enhance their work in a
few ways:
\begin{itemize}
\item We prove the meromorphic continuation of the relevant Dirichlet
series to all $s\in\C$ (compared with $\Re(s)>0$ in \cite{conrey-li}).
\item We give explicit
formulas for all terms, without replacing any by estimates. Thus,
our formula entails no loss of generality, in the sense that one could
reverse the proof to derive the trace formula from it.
\item For $N>1$ we compute the trace over the \emph{newforms} of level $N$
rather than the whole spectrum. The result is a significantly
cleaner formula, though again this entails no loss of generality,
since one can recover the full formula for level $N$
by summing the formulas for newforms of levels dividing $N$.
\item We treat the Hecke operators $T_n$ for all non-zero $n$ co-prime
to the level $N$, including $n<0$. When $n<0$, there are no elliptic
terms in the trace formula, and this leads to a simpler result that is
useful for applications.
\item We base our calculations on a version of the trace formula published
by Str\"ombergsson \cite{str}, rather than working out each term from
first principles. The advantage is that Str\"ombergsson's formula has been
vetted by comparing the two sides numerically, so it is highly robust,
and this helps limit the potential for errors in the final formula. For
instance, our formula shows that the Dirichlet series we obtain can have
poles at the zeros of the scattering determinant (which are in turn
related to zeros of the Riemann zeta-function), a fact which seems to
have been overlooked in \cite{conrey-li}.
\end{itemize}

We present two applications of our formula. First, for prime $N$, we
derive a statement equivalent to Selberg's eigenvalue conjecture for
$\Gamma_0(N)$, in terms of the analytic properties of twists by the
quadratic character (mod $N$) of the family of Dirichlet series arising
from our formula for level $1$. A similar criterion was given by Li
in \cite{li3}, and in fact Li's formulation is simpler in a way since
it involves only a single Dirichlet series. However, our formulation
makes plain the fact that the passage from level $1$ to level $N$ is
essentially a quadratic twist, providing further support for the analogy
between exceptional eigenvalues and Siegel zeros.

Second, for squarefree $N>1$, we sum our formula for $T_{-n^2}$
acting on $\Gamma_0(N)\backslash\HH$ to obtain an explicit expression
for $\sum_{j=1}^\infty(-1)^{\epsilon_j}L^*(s,\Sym^2{f_j})$,
where $\{f_j\}_{j=1}^\infty$ is a complete, arithmetically normalized
sequence of Hecke--Maass newforms on $\Gamma_0(N)\backslash\HH$,
$\epsilon_j\in\{0,1\}$ is the parity of $f_j$, and
$L^*(s,\Sym^2{f_j})$ is the complete symmetric square $L$-function.
When $N=2$, the answer can be interpreted as the Rankin--Selberg
convolution of the weight $\frac12$ harmonic weak Maass form defined in
\cite{rw} with a weight 1 Eisenstein series, much like Shimura's integral
representation for the symmetric square $L$-function.  Similar formulas
have been derived for averages of $L$-functions over an $L^2$-normalized
basis (see, e.g., \cite{MR1189505}); to our knowledge, ours is the first such to
be derived from the Selberg trace formula, with arithmetic normalization.

\subsection{Notation and statement of main results}
Let $\DD$ denote the set of discriminants, that is
$$
	\DD=\{D\in\Z:D\equiv 0\mbox{ or }1\pmod*{4}\}.
$$
Any non-zero $D\in\DD$ may be expressed uniquely in the form $d\ell^2$,
where $d$ is a fundamental discriminant and $\ell>0$. We define
$\psi_D(n)=\left(\frac{d}{n/\gcd(n,\ell)}\right)$, where
$\left(\frac{\;\;}{\;\;}\right)$ denotes the Kronecker symbol.  Note that
$\psi_D$ is periodic modulo $D$, and if $D$ is fundamental then $\psi_D$
is the usual quadratic character mod $D$.  Set
$$
L(s,\psi_D)=\sum_{n=1}^{\infty}\frac{\psi_D(n)}{n^s}
\quad\mbox{for }\Re(s)>1.
$$
Then it is not hard to see that
$$
L(s,\psi_D)=L(s,\psi_d)\prod_{p\mid\ell}
\Biggl[1+\bigl(1-\psi_d(p)\bigr)\sum_{j=1}^{\ord_p(\ell)}p^{-js}\Biggr],
$$
so that $L(s,\psi_D)$ has analytic continuation to $\C$, apart from a
simple pole at $s=1$ when $D$ is a square. In particular, if $D$ is not
a square then we have
\begin{equation}\label{eqn:L1def}
L(1,\psi_D)=L(1,\psi_d)\cdot\frac1{\ell}\prod_{p\mid\ell}
\left[1+\bigl(p-\psi_d(p)\bigr)\frac{(\ell,p^\infty)-1}{p-1}\right].
\end{equation}

Our first result is the following:
\begin{theorem}\label{thm:main1}\hspace{1cm}
\begin{enumerate}
\item
For any positive integer $n$, the series
\begin{equation}\label{eqn:level1series}
\sum_{\substack{t\in\Z\\\sqrt{t^2+4n}\notin\Z}}
\frac{L(1,\psi_{t^2+4n})}{(t^2+4n)^s}
\end{equation}
has meromorphic continuation to $\C$ and is holomorphic for
$\Re(s)>0$, apart from a simple pole of residue $\sigma_{-1}(n)$ at $s=\frac12$.
\item If $n$ is a positive integer and $N$ is a prime
such that $\left(\frac{-4n}{N}\right)=-1$,
then the series
\begin{equation}\label{eqn:quadtwist}
\sum_{\substack{t\in\Z\\\sqrt{t^2+4n}\notin\Z}}
\frac{L(1,\psi_{t^2+4n})\left(\frac{t^2+4n}{N}\right)}{(t^2+4n)^s}
\end{equation}
has meromorphic continuation to $\C$ and is holomorphic for
$\Re(s)>\frac7{64}$.
\item For any prime $N$, the Selberg eigenvalue conjecture
is true for $\Gamma_0(N)$ if and only if \eqref{eqn:quadtwist} is
holomorphic on $\Re(s)>0$ for all primes $n$ satisfying
$\left(\frac{-4n}{N}\right)=-1$.
\end{enumerate}
\end{theorem}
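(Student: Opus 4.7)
The plan is to apply the Selberg trace formula for $T_{-n}$ with a test function whose Selberg transform packages the (purely hyperbolic) geometric side into the Dirichlet series of interest. Because $n>0$ forces $t^{2}+4n>0$ for every $t\in\Z$, there are no elliptic contributions, and with a kernel $h_{s}(r)$ behaving like $(s^{2}+r^{2})^{-1}$ (up to the normalizations employed in Str\"ombergsson's formulation) the spectral sum $\sum_{j}h_{s}(r_{j})$ places a candidate pole at $s=\pm ir_{j}$ for each spectral parameter $r_{j}$.

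For part (1), at level $1$ each hyperbolic conjugacy class of trace $t$ and determinant $-n$ has discriminant $D=t^{2}+4n$, and its contribution is, by Dirichlet's class number formula, a constant multiple of $L(1,\psi_{D})$ times the chosen kernel; summing over $t$ then assembles \eqref{eqn:level1series} together with the explicit identity and parabolic/Eisenstein remainders. Roelcke's theorem guarantees that every cuspidal $r_{j}$ for $\SL_{2}(\Z)$ is real, so the cuspidal spectral sum is holomorphic in $\Re(s)>0$, and the only pole there comes from the Eisenstein continuous spectrum at $s=1/2$. A direct residue computation, reading off the Hecke eigenvalue of $E(\cdot,w)$ at the appropriate value of $w$, yields precisely $\sigma_{-1}(n)$.

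For part (2), with $N$ prime and $\gcd(n,N)=1$ I extract the newform trace from the M\"obius-type identity $\Tr(T_{-n}\mid\Anew)=\Tr(T_{-n}\mid\A_{N})-2\Tr(T_{-n}\mid\A_{1})$ and apply the trace formula to each piece. A hyperbolic $\SL_{2}(\Z)$-orbit of discriminant $D$ splits into $1+\bigl(\tfrac{D}{N}\bigr)$ orbits under $\Gamma_{0}(N)$ whenever $N\nmid D$, and the hypothesis $\bigl(\tfrac{-4n}{N}\bigr)=-1$ forces $N\nmid D=t^{2}+4n$ for every $t$; consequently, the difference of the geometric sides produces exactly the quadratic twist $\bigl(\tfrac{D}{N}\bigr)$ appearing in \eqref{eqn:quadtwist}, with the Eisenstein/parabolic contributions matching through the subtraction. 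On the spectral side, the Kim--Sarnak bound $|\Im r_{j}|\leq 7/64$ for newforms of level $N$ confines all possible poles of the continued series to $|\Re s|\leq 7/64$, giving holomorphy on $\Re(s)>7/64$.

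For part (3), the forward direction is immediate: Selberg's conjecture asserts $r_{j}\in\R$ for every newform, hence every spectral pole of \eqref{eqn:quadtwist} lies on $\Re(s)=0$. For the converse, an exceptional eigenvalue $1/4-t^{2}$ with $t\in(0,7/64]$ would place a pole at $s=t>0$ unless the weighted sum of Hecke eigenvalues over the corresponding eigenspace vanishes for every admissible $n$; I rule this out by combining the Dirichlet density $1/2$ of primes with $\bigl(\tfrac{-4p}{N}\bigr)=-1$ with the fact that $\{p:\lambda_{f}(p)\neq 0\}$ has positive density for any fixed nonzero Maass newform. I expect the main technical obstacle to be the orbit-counting and bookkeeping in part (2) needed to isolate the Kronecker symbol $\bigl(\tfrac{D}{N}\bigr)$ cleanly and to track the Eisenstein/parabolic contributions through the newform subtraction; this is where the hypothesis $\bigl(\tfrac{-4n}{N}\bigr)=-1$ plays its most essential role.
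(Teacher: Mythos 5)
Your overall route for parts (1) and (2) is essentially the paper's: apply the trace formula to $T_{-n}$ (so that $D=t^2+4n>0$ and there are no elliptic terms), choose a test function depending on $s$ so that the hyperbolic side becomes $\sum_t L(1,\psi_{t^2+4n})(t^2+4n)^{-s}$ via the class number formula, and produce the twist at prime level by combining the level-$N$ and level-$1$ formulas using the splitting factor $1+\left(\frac{D}{N}\right)$, with Kim--Sarnak controlling the exceptional spectrum. Two bookkeeping slips are worth flagging, though they are repairable: (i) the pole at $s=\frac12$ with residue $\sigma_{-1}(n)$ comes from the $\lambda=0$ constant eigenfunction term $\frac{\sigma_1(n)}{\sqrt n}h\!\left(\frac i2\right)$, not from the continuous spectrum (in the paper's normalization $h(r)=\Beta(s+ir,s-ir)$, so $h(\frac i2)=\Beta(s-\frac12,s+\frac12)$); and (ii) with your identity $\Tr(T_{-n}|\Anew)=\Tr(T_{-n}|\A_N)-2\Tr(T_{-n}|\A_1)$ the geometric difference carries the coefficient $\left(\frac{D}{N}\right)-1$, not $\left(\frac{D}{N}\right)$; you must add back one copy of the level-$1$ series, and then verify that its $s=\frac12$ pole cancels against the $\mu(N)\frac{\sigma_1(n)}{\sqrt n}h(\frac i2)$ term ($\mu(N)=-1$), exactly as happens when \eqref{eqn:level1minus} and \eqref{eqn:levelNminus} are summed. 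You also do not supply the continuation of the continuous-spectrum term to all of $\C$ (the paper does this by a contour deformation around $u=0$ and repeated shifts, which is where the poles at zeros of $\zeta^*$ appear), and a kernel literally of the shape $(s^2+r^2)^{-1}$ decays too slowly in $r$ to be an admissible test function; the paper's $\Beta(s+ir,s-ir)$ decays exponentially.

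The genuine gap is in the converse direction of part (3). If the exceptional eigenvalue $\lambda_{\min}$ has an eigenspace spanned by several newforms $f_j$, $j\in J$, the residue of \eqref{eqn:quadtwist} at $s=\sqrt{\tfrac14-\lambda_{\min}}$ is proportional to the \emph{signed} sum $S_n=\sum_{j\in J}(-1)^{\epsilon_j}\lambda_j(n)$, and one must produce a single prime $n$ with $\left(\frac{-4n}{N}\right)=-1$ and $S_n\ne0$. Your argument --- intersecting the density-$\frac12$ set of admissible primes with the positive-density set where $\lambda_f(p)\ne0$ for ``a fixed nonzero newform'' --- does not do this: it ignores possible cancellation among the different $f_j$ (the quantity that must not vanish is the signed combination, not an individual coefficient), and even for $|J|=1$ a set of positive density and a set of density $\frac12$ need not intersect unless one proves equidistribution of the coefficients along the fixed residue class $\left(\frac{p}{N}\right)=-\left(\frac{-4}{N}\right)$. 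The paper's Lemma~\ref{lem:nonvanishing} supplies precisely the missing input: writing the indicator of the admissible primes as $\frac12\bigl(1-\left(\frac{-4}{N}\right)\left(\frac{p}{N}\right)\bigr)$, one evaluates $\sum_{p\le x}|S_p|^2$ by Rankin--Selberg orthogonality of distinct newforms, and shows the character-twisted sum is $o(\pi(x))$ because $f_j\otimes\left(\frac{\cdot}{N}\right)$ is a newform of level $N^2$, hence distinct from every $f_k$; this yields $\sum_{p\le x,\,\left(\frac{-4p}{N}\right)=-1}|S_p|^2\sim\frac12\sum_{j\in J}|c_j|^2\,\pi(x)>0$, forcing the required prime to exist. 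Without this (or an equivalent) twisted second-moment argument, the converse of (3) is not proved.
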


\begin{remarks}\hspace{1pt}
\begin{enumerate}
\item
The locations and residues of the poles of \eqref{eqn:level1series}
and \eqref{eqn:quadtwist} are related to the trace of
$T_{-n}$ over the discrete spectrum of the Laplacian on
$L^2(\Gamma_0(1)\backslash\HH)$ and $L^2(\Gamma_0(N)\backslash\HH)$,
respectively. See Propositions~\ref{prop:specialization_N=1} and
\ref{prop:specialization_N>1} for full details.
\item
A simple consequence of (1) is the asymptotic
$$
\sum_{\substack{t\in\Z\cap[1,X]\\\sqrt{t^2+4n}\notin\Z}}
L(1,\psi_{t^2+4n})\sim\sigma_{-1}(n)X
\quad\mbox{as }X\to\infty.
$$
In fact, arguing as in the proof of Theorem~\ref{thm:main3} below, one
can see that the two sides are equal up to an error of
$O_{n,\varepsilon}\big(X^{\frac35+\varepsilon}\big)$.
Related averages over discriminants of the form $t^{2k}-4$ for fixed $k$
were computed by Sarnak \cite{sarnak2} and subsequently generalized by
Raulf \cite{raulf}, who obtained averages over arithmetic progressions
and also sieved to reach the fundamental discriminants. It would be
interesting to see whether our formula for the generating function could
be used in conjunction with Raulf's work to obtain sharper error terms.
(See also Hashimoto's recent improvement \cite{hashimoto} of
\cite{sarnak1} and \cite{raulf} for the closely related problem of
determining the average size of the class number over discriminants
ordered by their units.)
\end{enumerate}
\end{remarks}

\medskip
Next, we define more general versions of the coefficients $L(1,\psi_D)$
that will turn out to be related
to the newforms of a given squarefree level $N>1$. For non-zero
$D=d\ell^2\in\DD$, let $m=(N^\infty,\ell)$, and define
\begin{equation}\label{eqn:cNdef}
c_N(D)=\begin{cases}
m^{-1}\prod_{p\mid N}(\psi_{D/m^2}(p)-1)\cdot L(1,\psi_{D/m^2})
&\mbox{if }d\ne1,\\
\Lambda(N)\bigl(m^{-1}-\frac{2N}{N+1}\bigr)&\mbox{if }d=1,
\end{cases}
\end{equation}
where $\Lambda$ denotes the von Mangoldt function. For notational
convenience, we set $c_N(D)=0$ when $D\equiv 2$ or $3\pmod*{4}$.

When $N=2$, it was shown in \cite{rw} that the numbers
$$
c^+(n)=\begin{cases}
c_2(n)&\mbox{if }n\equiv0,1\pmod*{4},\\
2c_2(4n)&\mbox{if }n\equiv2,3\pmod*{4},
\end{cases}
$$
for $n>0$, are the Fourier coefficients
of a weight $\frac12$ mock modular form for $\Gamma_0(4)$
with shadow $\Theta^3$, where
$\Theta=\sum_{n\in\Z}q^{n^2}$ is the classical theta
function.\footnote{Our definition of $c^+(n)$ differs from that in
\cite{rw} in a few minor ways. First, we have scaled their definition
by the constant $\frac{\pi}6$. Second, there is a mistake in the formula
for $c^+(n)$ given in \cite{rw} for square values of $n$, to the effect
that their formula should be multiplied by $2-2^{-\ord_2(n)/2}$.  Third,
the mock modular form is only determined modulo $\C\Theta$ from its
defining properties; we add a particular multiple of $\Theta$ to make
Theorem~\ref{thm:main2} as symmetric as possible.}

Now, for a positive integer $n$, put
$$
r(n)=\tfrac12\#\bigl\{(x,y)\in\Z^2:n=x^2+4y^2\bigr\}
=
\bigl(1+\cos\tfrac{\pi{n}}2\bigr)\sum_{d\mid n}\psi_{-4}(d).
$$
The factor $\frac12$ is chosen to make
$r$ multiplicative; in fact, we have
$$
\sum_{n=1}^\infty\frac{r(n)}{n^s}=
(1-2^{-s}+2^{1-2s})\zeta(s)L(s,\psi_{-4}),
$$
so that $r(n)$ are the Fourier coefficients of a
modular form of weight $1$ and level $16$.
\begin{theorem}\label{thm:main2}
Let $N>1$ be a squarefree integer, and
let $\{f_j\}_{j=1}^\infty$ be a complete sequence of arithmetically normalized
Hecke--Maass newforms on $\Gamma_0(N)\backslash\HH$, with parities
$\epsilon_j\in\{0,1\}$, Laplace
eigenvalues $\frac14+r_j^2$ and Hecke eigenvalues $\lambda_j(n)$.
Define
$$
\Gamma_\R(s) = \pi^{-\frac{s}{2}} \Gamma\!\left(\tfrac{s}{2}\right), \quad
\zeta^*(s)=\Gamma_\R(s)\zeta(s),\quad
E_N^*(s)=N^{s/2}\prod_{p\mid N}(1-p^{-s}),\quad
\zeta_N^*(s)=E_N^*(s)\zeta^*(s),
$$
$$
L^*(s,\Sym^2{f_j})=
\frac{\Gamma_\R(s)\Gamma_\R(s-2ir_j)\Gamma_\R(s+2ir_j)}{\Gamma_\R(2s)}
\zeta_N^*(2s)\sum_{n=1}^\infty\frac{\lambda_j(n^2)}{n^s},
$$
$$
\IN(s;\sigma)=\frac1{2\pi i}
\int_{\Re(u)=-\sigma}\frac{E_N^*(s)E_N^*(1-s)}{E_N^*(u)E_N^*(1-u)}
E_N^*(2u)\zeta^*(s-u)\zeta^*(s+u)\,du,
$$
and
$$
F_N(s)=\zeta_N^*(4s)\Gamma_\R(2s)
\sum_{n=1}^\infty\frac{c_N(n)r(n)}{n^s}.
$$
Then, for any $\sigma>2$ and $s\in\C$ with
$\Re(2s)\in(2,\sigma)$, we have
$$
\begin{aligned}
F_N(s)&=\sum_{j=1}^\infty(-1)^{\epsilon_j}L^*(2s,\Sym^2{f_j})\\
&+\zeta^*(2s)\left(
\sqrt{N}\zeta_N^*(2s-1)\zeta_N^*(-2s)
-\tfrac{N\Lambda(N)}{N+1}
\bigl[\zeta_N^*(4s)+\zeta_N^*(2-4s)+\IN(2s;\sigma)\bigr]\right).
\end{aligned}
$$
In particular, $F_N(s)$ continues to an entire function,
apart from at most simple poles at $s\in\{-\frac12,0,\frac12,1\}$,
and is symmetric with respect to $s\mapsto\frac12-s$.
\end{theorem}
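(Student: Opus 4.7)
The plan is to derive this identity by summing the Dirichlet-series form of the Selberg trace formula for $T_{-n^2}$, acting on the newforms of level $N$, over all $n\ge1$. Concretely, I would start from the trace formula identity established earlier in the paper as Proposition~\ref{prop:specialization_N>1}, fix $s$ with $\Re(s)$ large, and weight each instance by $n^{-s}$ before summing in $n$. Since $T_{-n^2}=T_{-1}T_{n^2}$ and $T_{-1}$ acts on a Maass newform $f_j$ as multiplication by $(-1)^{\epsilon_j}$, the spectral side reduces to $\sum_j(-1)^{\epsilon_j}\sum_{n\ge1}\lambda_j(n^2)/n^s$; multiplying through by the precise archimedean and $\zeta_N^*$ factors built into the definition of $L^*(2s,\Sym^2 f_j)$ then reproduces the first term on the right-hand side (with the variable of Proposition~\ref{prop:specialization_N>1} relabeled as $2s$).

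The geometric side requires more work. The hyperbolic terms of $T_{-n^2}$ are indexed by integers $t$ with $D=t^2+4n^2$ not a square, and, after projection to newforms, contribute $c_N(D)/D^s$ up to archimedean factors. Summing in $n$ and reindexing by $D$, each such $D$ is reached by exactly $r(D)$ ordered pairs $(t,n)$ with $n\ge1$, so the hyperbolic contribution collapses into $\sum_D c_N(D)r(D)/D^s$. I would then check that the square values of $D$ excluded from the hyperbolic range are supplied by the unipotent/identity residues of the individual trace formulas, matching the $d=1$ branch of $c_N$ in \eqref{eqn:cNdef}. The remaining unipotent and scattering-determinant contributions survive the $n$-summation and, after some rearrangement, produce the correction block involving $\sqrt{N}\zeta_N^*(2s-1)\zeta_N^*(-2s)$, the $\Lambda(N)$-weighted $\zeta_N^*$ terms, and the contour integral $\IN(2s;\sigma)$.

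For the meromorphic continuation and the symmetry $s\mapsto\tfrac12-s$, I would not try to continue $F_N(s)$ directly; instead I would observe that each term on the right-hand side continues to a meromorphic function of $s\in\C$ with poles only in $\{-\tfrac12,0,\tfrac12,1\}$ (coming from the zeta factors) and is manifestly invariant under $2s\mapsto 1-2s$, using the functional equations of $L^*(s,\Sym^2 f_j)$, of $\zeta^*$ and $\zeta_N^*$, and the invariance of $\IN(2s;\sigma)$ under $u\mapsto 1-u$ combined with the functional equation of $E_N^*$.

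The main obstacle will be the bookkeeping of normalization constants: tracking the $\sqrt{N}$, $E_N^*$, $\Lambda(N)/(N+1)$, and $N/(N+1)$ factors as they emerge from the newform projection of Str\"ombergsson's trace formula, and verifying that $\IN(2s;\sigma)$ appears with the precise measure, contour, and sign stated. A secondary technical point is justifying the interchange of the sum over $n$ with the trace formula: absolute convergence of the doubled geometric series holds only for $\Re(s)$ sufficiently large (controlled by $L(1,\psi_D)r(D)\ll_\varepsilon D^\varepsilon$), and the identity in the stated strip $\Re(2s)\in(2,\sigma)$ is then obtained by meromorphic continuation of both sides.
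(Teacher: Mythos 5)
Your overall strategy is the one the paper follows: specialize the newform trace formula (Proposition~\ref{prop:specialization_N>1}) to $T_{-n^2}$, weight by a power of $n$, sum over $n$, identify the spectral side with $\sum_j(-1)^{\epsilon_j}L^*(2s,\Sym^2 f_j)$, and reindex the geometric side by $D=t^2+4n^2$. However, there is a genuine gap at the central reindexing step. The claim that ``each such $D$ is reached by exactly $r(D)$ ordered pairs $(t,n)$ with $n\ge1$, so the hyperbolic contribution collapses into $\sum_D c_N(D)r(D)/D^s$'' is not correct as stated, for two reasons. First, \eqref{eqn:levelNminus} is only available for $(n,N)=1$, so the representation count you actually obtain is $r^{(N)}(D)=\tfrac12\#\{(x,y):D=x^2+4y^2,\ (y,N)=1\}$, not $r(D)$, and the coefficient produced by the trace formula is $c_N^\circ(D)$, which differs from $c_N(D)$ on square $D$; second, even without the coprimality restriction, square values of $D$ are hit by $r(D)-1$ pairs with $n\ge1$. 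Completing the spectral sum from $(n,N)=1$ to all $n$ forces (via Atkin--Lehner, $\lambda_j(p^{2k})=p^{-k}$ for $p\mid N$) a factor $E_N(2s+1)^{-1}$, and it is precisely the expansion of this factor, combined with the homogeneity $c_N^\circ(m^2D)=m^{-1}c_N^\circ(D)$ for $m\mid N^\infty$ and a telescoping identity for $\sum_{m\mid N^\infty}r^{(N)}(Dm^{-2})$ (Lemmas~\ref{lem:rNell2}--\ref{lem:rNtelescope} and Corollary~\ref{cor:rNtelescope}), that converts $r^{(N)}$ into $r$ up to explicit correction terms supported on square $D$ when $N$ is prime.

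Those corrections are not mere bookkeeping: together with the $c_N$ versus $c_N^\circ$ discrepancy on squares and the $2\Lambda(N)$-weighted integral term of \eqref{eqn:levelNminus}, they must cancel exactly, after a Mellin--Barnes representation of $(a+n^2N^{2r}/a)^{-2s}$ and a contour shift picking up residues at $u=1-2s$ and $u=-2s$, to produce the block $\tfrac{N\Lambda(N)}{N+1}\bigl[\zeta_N^*(4s)+\zeta_N^*(2-4s)+\IN(2s;\sigma)\bigr]$. Your proposal attributes this block to ``unipotent and scattering-determinant contributions,'' but the level-$N$ newform formula contains no scattering term (the $\phi'/\phi$ integral occurs only at level $1$); the non-hyperbolic pieces here are the residual term $\mu(N)\sigma_1/\sqrt{n}\,h(i/2)$, which yields $\sqrt{N}\zeta^*(2s)\zeta_N^*(2s-1)\zeta_N^*(-2s)$, and the $\Lambda(N)$ integral. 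Without identifying this mechanism, the right-hand side of the theorem (in particular the appearance of $\IN(2s;\sigma)$ with its precise contour and weight) cannot be reached. Your treatment of the spectral side, of convergence in the stated strip, and of the $s\mapsto\tfrac12-s$ symmetry via the functional equations of $\zeta^*$, $\zeta_N^*$ and $L^*(\cdot,\Sym^2 f_j)$ is fine and matches the paper.
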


An analogue of Theorem~\ref{thm:main2} holds for $N=1$ as well,
but the result is more complicated to state. We content ourselves with
the following consequence.
\begin{theorem}\label{thm:main3}
As $X\to\infty$, for any $\varepsilon>0$, 
$$
	\sum_{\substack{0<D\leq X\\\sqrt{D}\notin\Z}}
	L(1, \psi_D) r(D) 
	=
	\frac{15 \zeta(3)}{4\pi} X + O\bigl(X^{\frac{8}{11}+\varepsilon}\bigr).
$$
\end{theorem}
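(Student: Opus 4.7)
\textbf{Proof proposal for Theorem~\ref{thm:main3}.}
The plan is to establish an $N=1$ analogue of Theorem~\ref{thm:main2} and then extract the asymptotic via a smoothed Perron-type argument. By \eqref{eqn:cNdef}, the coefficient $c_1(D)$ equals $L(1,\psi_D)$ for non-square positive discriminants $D$ and vanishes otherwise; since $r(D)=0$ unless $D\equiv 0,1\pmod*{4}$, the sum of interest matches the coefficients of
$$
\tilde D(s):=\sum_{\substack{D>0\\\sqrt D\notin\Z}}\frac{L(1,\psi_D)r(D)}{D^s}=\sum_{n=1}^\infty\frac{c_1(n)r(n)}{n^s},
$$
and $F_1(s):=\zeta^*(4s)\Gamma_\R(2s)\tilde D(s)$ is the level-$1$ counterpart of the generating function in Theorem~\ref{thm:main2}.

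First, I would repeat the derivation of Theorem~\ref{thm:main2} at level $1$. At $N=1$ one must account for additional contributions from the constant function and from the residues of Eisenstein series---this is the source of the more complicated statement alluded to after Theorem~\ref{thm:main2}---but all such terms remain explicit and meromorphic in $s$. The outcome is a spectral identity
$$
F_1(s)=\sum_j(-1)^{\epsilon_j}L^*(2s,\Sym^2 f_j)+E_1(s),
$$
where $\{f_j\}$ runs over an arithmetically normalized Hecke--Maass basis for $\SL_2(\Z)\backslash\HH$ and $E_1(s)$ is an explicit meromorphic function. Among the constituents of $E_1(s)$ is $\zeta^*(2s)\zeta^*(2s-1)\zeta^*(-2s)$, whose residue at $s=1$ works out to $\zeta^*(2)\cdot\tfrac12\cdot\zeta^*(-2)=\zeta(3)/24$. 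Dividing by $\zeta^*(4)\Gamma_\R(2)=\pi/90$ produces $\mathrm{Res}_{s=1}\tilde D(s)=\frac{15\zeta(3)}{4\pi}$, exactly the claimed main-term constant; the remaining poles of $F_1(s)$ at $s\in\{-\tfrac12,0,\tfrac12\}$ contribute at most $O(X^{1/2})$, comfortably inside the error.

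Next, apply smoothed Perron. Let $\varphi$ be a smooth, compactly supported approximation to $\mathbf{1}_{[0,1]}$ at scale $1/H$, with $H\ge1$ a parameter to be chosen. Then
$$
\sum_D L(1,\psi_D)r(D)\varphi(D/X)=\frac1{2\pi i}\int_{(c)}\tilde D(s)\hat\varphi(s)X^s\,ds\qquad(c>1),
$$
and shifting to $\Re(s)=\sigma\in(0,1)$ picks up the residue at $s=1$ and leaves an integral to estimate. Using the spectral identity above together with moment bounds on $L(s,\Sym^2 f_j)$ averaged over the spectral parameter (via a Kuznetsov-type formula or a spectral large sieve), one controls the shifted integral, while the rapid decay of $\hat\varphi$ localises to $|\Im s|\ll HX^\varepsilon$. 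The cost of desmoothing is $O(X^{1+\varepsilon}/H)$, and balancing these estimates against $\sigma$ and $H$ yields the claimed exponent $8/11$.

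The main obstacle is obtaining the right moment bound on the shifted contour. The same argument applied to the simpler series~\eqref{eqn:level1series} produces the exponent $3/5$ quoted in the remark after Theorem~\ref{thm:main1}; the extra factor $r(n)$ here---essentially a twist by the weight-$1$ modular form of level $16$ whose $L$-function is $(1-2^{-s}+2^{1-2s})\zeta(s)L(s,\psi_{-4})$---introduces an additional convolution that degrades the exponent to $8/11$. Once the spectral estimate is in place, the remaining steps are standard contour integration.
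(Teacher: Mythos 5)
Your outline follows essentially the same route as the paper: the paper also starts from \eqref{eqn:level1minus} of Proposition~\ref{prop:specialization_N=1} (summed over square $n$), obtains the level-$1$ spectral expression \eqref{e:F1s} for $F(s)=\sum L(1,\psi_D)r(D)D^{-s}$, applies a (truncated rather than smoothed) Perron formula, shifts the contour, and extracts the main term from the pole at $s=1$. Your identification of the polar term $\zeta^*(2s)\zeta^*(2s-1)\zeta^*(2s+1)$ and the resulting constant $\frac{15\zeta(3)}{4\pi}$ is correct and agrees with the paper.

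The genuine gap is the error term. The exponent $\frac{8}{11}$ is not a matter of routine balancing; it comes from specific estimates on the shifted contour which you assert but do not establish. The paper shifts to $\Re(s)=\frac14$, so that the symmetric-square $L$-functions sit on their critical line $\Re(2s)=\frac12$, and proves the two bounds in \eqref{e:F14}: $F(\tfrac14+it)\ll_\varepsilon(1+|t|)^{\frac94+\varepsilon}$ pointwise, and $\int_{-T}^T|F(\tfrac14+it)|\,dt\ll_\varepsilon T^{\frac{11}{4}+\varepsilon}$. The pointwise bound uses Stirling, the Weyl law and the hybrid convexity bound of \cite{harcos}; the first-moment bound uses Cauchy--Schwarz together with the crucial $t$-aspect second-moment estimate $\int_{-T}^T|L(\tfrac12+2it,\Sym^2 f_j)|^2\,dt\ll_\varepsilon(1+r_j)^\varepsilon T^{\frac32}\log T$, supplied by \cite{kss} and \cite{ramakrishnan-wang} since $\Sym^2 f_j$ is a cuspidal degree-three $L$-function. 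Only with these inputs, plus convexity on the horizontal segments, does the choice $T=X^{\frac{3}{11}}$ yield $O(X^{\frac{8}{11}+\varepsilon})$. Your proposed substitutes (``a Kuznetsov-type formula or a spectral large sieve'') do not obviously deliver this: the relevant Dirichlet coefficients are $\lambda_j(n^2)$, for which large-sieve inequalities are notoriously weaker, and you give no balancing computation, so $\frac{8}{11}$ remains an assertion; likewise the claim that the factor $r(n)$ ``degrades'' the exponent $\frac35$ of the remark after Theorem~\ref{thm:main1} to $\frac{8}{11}$ is a heuristic, not an argument. The parts you treat as routine (the continuous-spectrum integral and the square-discriminant corrections in \eqref{e:F1s}, and the pole at $s=\frac12$ contributing $O(X^{\frac12+\varepsilon})$) are indeed handled routinely in the paper, but the moment bounds above are the heart of the proof and are missing from your proposal.
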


\subsection{Outline of the paper}
In Section~\ref{sec:tf} we present the trace formula for $T_n$ acting
on $\Gamma_0(N)\backslash\HH$, $N$ squarefree, using a form of the
test function that will be convenient for later application; see
Propositions \ref{prop:tf1} and \ref{prop:tf2}.
In Section~\ref{sec:specialization}
we specialize the choice of test function as in \cite{conrey-li}, so
that the hyperbolic terms become Dirichlet series. Finally, in
Section~\ref{sec:applications} we apply the formula derived in
Section~\ref{sec:specialization} to prove
Theorems~\ref{thm:main1}--\ref{thm:main3}.

\subsection*{Acknowledgements}
We thank Dorian Goldfeld and Peter Sarnak for helpful suggestions and
corrections.

\section{The Selberg trace formula}\label{sec:tf}
Let $N$ be a squarefree positive integer, and
for any $\lambda\in\R_{\ge0}$, let $\A(N,\lambda)$ denote the space of
automorphic forms $f\in L^2(\Gamma_0(N)\backslash\HH)$ satisfying
$\left(y^2\bigl(\frac{\partial^2}{\partial x^2}
+\frac{\partial^2}{\partial y^2}\bigr)+\lambda\right)f=0$.
We begin with the trace formula for level $1$.
\begin{proposition}\label{prop:tf1}
Let $n$ be a non-zero integer and
$q:[0,\infty)\to\C$ a smooth function
satisfying $q(v)\ll(1+v)^{-\frac12-\delta}$ for some $\delta>0$.
Define
\begin{align*}
f(y)&=q\!\left(\frac{y^2+2(n-|n|)}{4|n|}\right),\\
h(r)&=\int_\R q\bigl(\sinh^2\bigl(\tfrac{u}2\bigr)\bigr)e^{iru}\,du
=2|n|^{-ir}\int_0^\infty f\!\left(v-\frac{n}{v}\right)v^{2ir-1}\,dv
\end{align*}
for $r\in\C$ with $|\Im(r)|<\frac12+\delta$,
$$
W(D)=
\begin{cases}
L(1,\psi_D)\frac{\sqrt{|D|}}{\pi}
\int_\R\frac{f(y)}{y^2+|D|}\,dy
&\mbox{if }D<0,\\
L(1,\psi_D)f\bigl(\sqrt{D}\bigr)
&\mbox{if }D>0,\;\sqrt{D}\notin\Z,\\
\sum_{m\mid\sqrt{D}}\Lambda(m)(1-m^{-1})f(\sqrt{D})
+\int_{\sqrt{D}}^\infty\frac{f(y)}{y+{\sqrt{D}}}\,dy
&\mbox{if }0\ne\sqrt{D}\in\Z,\\
(\gamma-\log2)f(0)
+\frac12\int_0^\infty\frac{f(y)+f(y^{-1})-f(0)}{y}\,dy
+\frac13\int_0^\infty\frac{f(0)-f(y)}{y^2}\,dy
&\mbox{if }D=0
\end{cases}
$$
for $D\in\DD$, and
\begin{equation}\label{eqn:Fdef}
\begin{aligned}
F(a)=2\sum_{m=1}^\infty&\frac{\Lambda(m)}{m}
f\!\left(am-\frac{n}{am}\right)
+2a\int_a^\infty\frac{f(v-\frac{n}{v})-f(a-\frac{n}{a})}{v^2-a^2}\,dv\\
&+\bigl(\gamma+\log(4\pi)\bigr)
f\!\left(a-\frac{n}{a}\right)-\frac{h(0)}{4}
\end{aligned}
\end{equation}
for $a\in\Z_{>0}$ with $a\mid n$.
Then
\begin{equation}\label{eqn:tf1}
\begin{aligned}
\sum_{\lambda\in\R_{\ge0}}\Tr T_n|_{\A(1,\lambda)}
h\Bigl(\sqrt{\lambda-\tfrac14}\Bigr)
=\sum_{\substack{a\in\Z_{>0}\\a\mid n}}F(a)+\sum_{t\in\Z}W(t^2-4n)
\end{aligned}
\end{equation}
and
\begin{equation}\label{eqn:Faint}
\sum_{\substack{a\in\Z_{>0}\\a\mid n}}F(a)
=\frac1{4\pi}\int_\R h(r)\frac{\sigma_{2ir}(|n|)}{|n|^{ir}}
\frac{\phi'}{\phi}\!\left(\frac12+ir\right)dr
+\frac{\sigma_0(|n|)}4h(0),
\end{equation}
where $\phi(s)=\frac{\zeta^*(2(1-s))}{\zeta^*(2s)}$.
\end{proposition}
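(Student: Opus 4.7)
The plan is to obtain the formula by direct specialization of Str\"ombergsson's version of the Selberg trace formula, whose applicability is the whole point of the paper's emphasis on using \cite{str}. Str\"ombergsson's formula takes as input an even test function $h(r)$ together with its Fourier transform $g(u)$; for Hecke operators acting at level $1$, the geometric side sorts elements of $\operatorname{GL}_2^+(\Q)$ of determinant $n$ by trace $t$, so the terms are naturally indexed by the discriminant $D=t^2-4n$, and I would verify that substituting $g(u)=q(\sinh^2(u/2))$ reproduces precisely the $h$, $f$, $F$, and $W$ of the statement. The identity $h(r)=2|n|^{-ir}\int_0^\infty f(v-n/v)v^{2ir-1}\,dv$ is the Harish--Chandra/Selberg transform in the $v=e^u$ variable, combined with the change $y=v-n/v$ (for $n>0$) or $y=v+|n|/v$ (for $n<0$) that is encoded in the definition of $f(y)$ via $q((y^2+2(n-|n|))/(4|n|))$. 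A short computation confirms the claimed compatibility.

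Next I would match the geometric terms class by class. For $D<0$ one gets an elliptic contribution; the sum over elliptic conjugacy classes in $\operatorname{GL}_2^+(\Q)$ of fixed trace $t$ is well known to equal $L(1,\psi_D)$ (Dirichlet class number formula), while the local archimedean orbital integral is $\frac{\sqrt{|D|}}\pi\int_\R f(y)(y^2+|D|)^{-1}\,dy$ in the coordinates above; their product is $W(D)$. For $D>0$ non-square one gets a hyperbolic contribution: the sum over conjugacy classes of a given $t$ again equals $L(1,\psi_D)$, and the orbital integral collapses to the single value $f(\sqrt D)$. For $0\ne \sqrt D\in\Z$ the hyperbolic element is $\operatorname{GL}_2(\Q)$-conjugate to a diagonal rational matrix; this is the awkward ``parabolic-ish'' case, and matching its contribution with the $\sum_{m\mid\sqrt D}\Lambda(m)(1-m^{-1})f(\sqrt D)+\int_{\sqrt D}^\infty f(y)(y+\sqrt D)^{-1}\,dy$ expression requires a careful truncation argument; this is where Str\"ombergsson's vetted formulation saves considerable work. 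The $D=0$ case is the identity contribution, given by the standard $\gamma-\log 2$ and principal-value integrals in the third case of $W$.

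The terms $F(a)$ collect the unipotent and Eisenstein contributions. Each divisor $a\mid n$ gives a parabolic orbit whose local integral produces the Harish--Chandra transform $f(am-n/(am))$ weighted by $\Lambda(m)/m$, the integral $2a\int_a^\infty[f(v-n/v)-f(a-n/a)](v^2-a^2)^{-1}\,dv$ comes from the regularization that removes the logarithmic divergence of the parabolic sum, and the constant $(\gamma+\log(4\pi))f(a-n/a)-h(0)/4$ comes from the Eisenstein trace evaluated against $h$. To prove the integral representation \eqref{eqn:Faint}, I would feed the spectral expansion of the Eisenstein series on $\operatorname{SL}_2(\Z)\backslash\HH$ (whose Hecke eigenvalue at $T_n$ is $\sigma_{2ir}(|n|)|n|^{-ir}$ at the point $\frac12+ir$) into the trace-of-Eisenstein side of the Selberg formula, pick up the scattering term $\frac1{4\pi}\int h(r)\frac{\phi'}{\phi}(\frac12+ir)\cdot\sigma_{2ir}(|n|)|n|^{-ir}\,dr$, and add the diagonal residue contribution $\frac{\sigma_0(|n|)}4 h(0)$ coming from $r=0$. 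This is the standard Eisenstein trace, but I would need to verify the Hecke eigenvalue normalization of $E(z,s)$ at level $1$.

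The main obstacle I anticipate is not any single deep step but rather the bookkeeping required to reconcile conventions: aligning Str\"ombergsson's normalization of $h$, $g$ and the Hecke operator $T_n$ with the $v$-coordinate parametrization used here, handling the $n<0$ case (where elliptic terms disappear because $D=t^2-4n>0$ always), and, most delicately, proving the parabolic/square-discriminant case of $W(D)$ with the correct $\Lambda$-weighted sum and the principal-value integral. Once these are checked, the identities \eqref{eqn:tf1} and \eqref{eqn:Faint} follow by collecting terms.
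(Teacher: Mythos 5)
Your overall architecture is the paper's: specialize Str\"ombergsson's level-one formula, collapse the sum of class numbers over the orders of conductor $c\mid\ell$ to $L(1,\psi_D)$ via Dirichlet's class number formula together with \eqref{eqn:L1def}, and convert the archimedean integrals by the substitution $v=\sqrt{|n|}e^{u/2}$. However, two steps of your plan, as written, hide the actual content and would not go through without an additional idea. The main one is \eqref{eqn:Faint}: this is not ``the standard Eisenstein trace'' plus a check of the Hecke normalization of $E(z,s)$, but an identity between the explicitly defined $\sum_{a\mid n}F(a)$ and the scattering integral, and proving it requires the explicit evaluation (Hejhal)
$\frac1{4\pi}\int_\R h(r)\frac{\phi'}{\phi}\bigl(\frac12+ir\bigr)dr
= g(0)\log\pi-\frac1{2\pi}\int_\R h(r)\psi\bigl(\frac12+ir\bigr)dr-\frac{h(0)}2+2\sum_m\frac{\Lambda(m)}{m}g(2\log m)$,
together with the digamma identity
$-\frac1{2\pi}\int_\R h(r)\psi\bigl(\frac12+ir\bigr)dr=g(0)\log(4e^{\gamma})+\int_0^\infty\frac{g(u)-g(0)}{2\sinh(u/2)}\,du$;
only after shifting $g$ by $\log|a/d|$ and substituting $v=ae^{u/2}$ do the regularized integral and the constant $(\gamma+\log 4\pi)f(a-\frac{n}{a})-\frac{h(0)}4$ in $F(a)$ emerge. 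The same identity is then needed a second time inside the proof of \eqref{eqn:tf1}: the parabolic/Eisenstein block on the last line of \eqref{eqn:trace1} equals $\sum_{a}\bigl[F(a)-f(a-\frac{n}{a})\log\pi\bigr]$, and the leftover $\log\pi$ terms must be recombined with the $\bigl[\log\pi+\log\ell-\sum_{m\mid\ell}\frac{\Lambda(m)}{m}\bigr]$ factors to produce $\sum_{m\mid\ell}\Lambda(m)(1-m^{-1})f(\ell)$ in $W(\ell^2)$; the blocks of the two formulas do not correspond one-to-one, so ``collecting terms'' requires this exchange, which your plan does not anticipate.

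The second issue is your attribution of the $D=0$ case entirely to the identity element. The identity (area) term $\frac1{12\sqrt n}\int_\R r\tanh(\pi r)h(r)\,dr$ accounts only for the piece $\frac13\int_0^\infty\frac{f(0)-f(y)}{y^2}\,dy$ of $W(0)$; the remaining piece $(\gamma-\log2)f(0)+\frac12\int_0^\infty\frac{f(y)+f(y^{-1})-f(0)}{y}\,dy$ comes from the extra terms $g(0)\log\frac{\pi\sqrt n}{2}+\frac{h(0)}4-\frac1{2\pi}\int_\R h(r)\psi(1+ir)\,dr$ that appear in Str\"ombergsson's formula only when $\sqrt n\in\Z$, and extracting it needs a further digamma evaluation and the splitting of the resulting integral at the point $c$ with $2\sqrt n\sinh(c/2)=1$. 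These are computations rather than deep obstructions, but they are precisely where the specific shapes of $F(a)$ and $W$ come from, and your proposal's ``bookkeeping'' framing leaves them unaddressed.
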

\begin{proof}
We first derive \eqref{eqn:Faint}.
For a sufficiently nice, even Fourier transform pair $g,h$,
it was shown in \cite[p.~509]{hejhal2} that
$$
\frac1{4\pi}\int_\R h(r)
\frac{\phi'}{\phi}\!\left(\frac12+ir\right)dr
=g(0)\log\pi-\frac1{2\pi}\int_\R h(r)\psi\bigl(\tfrac12+ir\bigr)\,dr
-\frac{h(0)}2+2\sum_{m=1}^\infty\frac{\Lambda(m)}{m}g(2\log{m}).
$$
Replacing $g(u)$ by
$\sum_{\substack{ad=n\\a>0}}g\bigl(u-\log\bigl|\tfrac{a}{d}\bigr|\bigr)$
and $h(r)$ by $h(r)\sum_{\substack{ad=n\\a>0}}\bigl|\tfrac{a}{d}\bigr|^{ir}$,
we get
\begin{equation}\label{eqn:withpsiint}
\begin{aligned}
\frac1{4\pi}\int_\R h(r)&\frac{\sigma_{2ir}(|n|)}{|n|^{ir}}
\frac{\phi'}{\phi}\!\left(\frac12+ir\right)dr
+\frac{\sigma_0(|n|)}4h(0)\\
&=\sum_{\substack{ad=n\\a>0}}\Biggl[
g\!\left(\log\left|\frac{a}{d}\right|\right)\log\pi
-\frac1{2\pi}\int_\R h(r)\left|\frac{a}{d}\right|^{ir}
\psi\bigl(\tfrac12+ir\bigr)\,dr\\
&\qquad\qquad-\frac{h(0)}4
+2\sum_{m=1}^\infty\frac{\Lambda(m)}{m}
g\!\left(\log\left|\frac{a}{d}\right|-2\log{m}\right)\Biggr].
\end{aligned}
\end{equation}
Similarly, from the identity
$$
-\frac1{2\pi}\int_{-\infty}^{\infty}
h(r)\psi\bigl(\tfrac12+ir\bigr)\,dr
=g(0)\log\bigl(4e^{\gamma}\bigr)
+\int_0^{\infty}\frac{g(u)-g(0)}{2\sinh(u/2)}\,du
$$
we derive
$$
\begin{aligned}
-\sum_{\substack{ad=n\\a>0}}&\frac1{2\pi}\int_{-\infty}^{\infty}
h(r)\left|\frac{a}{d}\right|^{ir}
\psi\big(\tfrac12+ir\big) dr\\
&=\sum_{\substack{ad=n\\a>0}}
\left[g\!\left(\log\left|\frac{a}{d}\right|\right)\log\bigl(4e^{\gamma}\bigr)
+\int_0^{\infty}
\frac{g\!\left(u+\log\left|\frac{a}{d}\right|\right)
-g\!\left(\log\left|\frac{a}{d}\right|\right)}
{2\sinh(u/2)}\,du\right],
\end{aligned}
$$
so that
\begin{align*}
\frac1{4\pi}\int_\R h(r)&\frac{\sigma_{2ir}(|n|)}{|n|^{ir}}
\frac{\phi'}{\phi}\!\left(\frac12+ir\right)dr
+\frac{\sigma_0(|n|)}4h(0)
\\
&=\sum_{\substack{ad=n\\a>0}}\Biggl[
g\!\left(\log\left|\frac{a}{d}\right|\right)\log\bigl(4\pi e^\gamma\bigr)
+\int_0^{\infty}
\frac{g\!\left(u+\log\left|\frac{a}{d}\right|\right)
-g\!\left(\log\left|\frac{a}{d}\right|\right)}
{2\sinh(u/2)}\,du\\
&\qquad\qquad-\frac{h(0)}4
+2\sum_{m=1}^\infty\frac{\Lambda(m)}{m}
g\!\left(\log\left|\frac{a}{d}\right|-2\log{m}\right)\Biggr].
\end{align*}
Now let $g(u)=q(\sinh^2(\tfrac{u}2))$.
Then $g\!\left(u+\log\left|\frac{a}{d}\right|\right)
=f\big(ae^{\frac{u}2}-de^{-\frac{u}2}\big)$, so on making the
substitution $v=ae^{u/2}$, we get $\sum_{\substack{ad=n\\a>0}}F(a)$,
as required.

Turning to \eqref{eqn:tf1},
in \cite[\S2.1]{str} we find the following
trace formula for level $1$:
\begin{equation}\label{eqn:trace1}
\begin{aligned}
&\sum_{\lambda\in\R_{\ge0}}\Tr T_n|_{\A(1,\lambda)}
h\Bigl(\sqrt{\lambda-\tfrac14}\Bigr)\\
&=\sum_{\substack{t\in\Z\\\sqrt{t^2-4n}\notin\Z}} 
\biggl(\sum_{c\mid\ell}\hh(\rr[c])
\left[\rr[1]^\times:\rr[c]^\times\right] 
\biggr)A(t,n)\\
&+\begin{cases}
\frac1{12\sqrt{n}}\int_\R r\tanh(\pi r)h(r)\,dr
+g(0)\log\frac{\pi\sqrt{n}}2+\frac{h(0)}4
-\frac1{2\pi}\int_\R h(r)\psi(1+ir)\,dr
&\mbox{if }\sqrt{n}\in\Z,\\
0&\mbox{otherwise}
\end{cases}\\
&+\sum_{\substack{a\in\Z_{>0}\\a\mid n,\,a^2\ne n}}\Biggl\{
\left[\log\pi+\log\left|a-\frac{n}{a}\right|
-X\!\left(\left|a-\frac{n}{a}\right|\right)\right]
g\!\left(\log\left|\frac{a^2}{n}\right|\right)\\
&\qquad\qquad+\frac12\int_{\bigl|\log\frac{a}{\sqrt{|n|}}\bigr|}^\infty 
g(u)\frac{e^{\frac{u}2}+\sgn(n)e^{-\frac{u}2}}
{e^{\frac{u}2}-\sgn(n)e^{-\frac{u}2}
+\Bigl|\frac{a}{\sqrt{|n|}}-\sgn(n)\frac{\sqrt{|n|}}{a}\Bigr|}\,du\Biggr\}\\
&+\sum_{\substack{a\in\Z_{>0}\\a\mid n}}\biggl[
2\sum_{m=1}^\infty\frac{\Lambda(m)}{m} 
g\!\left(\log\left|\frac{a^2}{n}\right|-2\log m\right)-\frac{h(0)}4
-\frac1{2\pi}\int_{-\infty}^\infty h(r)\left|\frac{a^2}{n}\right|^{ir} 
\psi\bigl(\tfrac12+ir\bigr)\,dr\biggr],
\end{aligned}
\end{equation}
where the notation is as follows:
\begin{itemize}
\item $t^2-4n=d\ell^2$, where $d$ is a fundamental discriminant
and $\ell>0$;
\item $\rr[c]=\Z+\Z c\frac{d+\sqrt{d}}2$ is the quadratic
order of conductor $c$ in $\Q(\sqrt{d})$ and
$\rr[c]^\times$ is its unit group;
\item $\hh(\rr[c])=\frac{\hh(\rr[1])c\prod_{p\mid c}(1-\psi_d(p)p^{-1})}
{[\rr[1]^\times:\rr[c]^\times]}$
is the narrow class number of $\rr[c]$;
\item $\epsilon_d^+>1$ is the smallest unit in $\rr[1]^\times$
with norm $1$ (i.e.\
the fundamental unit when it has norm $1$ and its square otherwise);
\item
$A(t,n)=\begin{cases}
\frac{\log\epsilon_d^+}{\sqrt{t^2-4n}}
g\!\left(2\log\frac{|t|+\sqrt{t^2-4n}}{2\sqrt{|n|}}\right)
&\mbox{if }t^2-4n>0,\\
\frac2{|\rr[1]^\times|\sqrt{4n-t^2}}
\int_\R\frac{e^{-2r\arccos(|t|/2\sqrt{|n|})}}{1+e^{-2\pi r}}h(r)\,dr
&\mbox{if }t^2-4n<0;
\end{cases}$
\item $X(u)=\frac1u\sum_{m\pmod*{u}}\log\gcd(m,u)
=\sum_{m\mid u}\frac{\Lambda(m)}{m}$.
\end{itemize}

Writing $D=t^2-4n=d\ell^2$, we have
\begin{align*}
&\hh(\rr[c])\left[\rr[1]^\times:\rr[c]^\times\right]A(t,n)\\
&=\frac{c}{\ell}
\prod_{p\mid c}\left(1-\frac{\psi_d(p)}p\right)
\begin{cases}
\frac{\hh(\rr[1])\log\epsilon_d^+}{\sqrt{d}}
g\!\left(2\log\frac{|t|+\sqrt{D}}{2\sqrt{|n|}}\right)
&\mbox{if }D>0,\\
\frac{2\hh(\rr[1])}{|\rr[1]^\times|\sqrt{|d|}}
\int_\R\frac{e^{-2r\arccos(|t|/2\sqrt{|n|})}}{1+e^{-2\pi r}}h(r)\,dr
&\mbox{if }D<0.
\end{cases}
\end{align*}
By Dirichlet's class number formula, we have
$$
L(1,\psi_d)=
\begin{cases}
\frac{\hh(\rr[1])\log\epsilon_d^+}{\sqrt{d}}
&\mbox{if }d>0,\\
\frac{2\pi\hh(\rr[1])}{|\rr[1]^\times|\sqrt{|d|}}
&\mbox{if }d<0,
\end{cases}
$$
so this becomes
$$
L(1,\psi_d)
\frac{c}{\ell}\prod_{p\mid c}\left(1-\frac{\psi_d(p)}p\right)
\begin{cases}
g\!\left(2\log\frac{|t|+\sqrt{D}}{2\sqrt{|n|}}\right)
&\mbox{if }t^2-4n>0,\\
\frac1\pi
\int_\R\frac{e^{-2r\arccos(|t|/2\sqrt{|n|})}}{1+e^{-2\pi r}}h(r)\,dr
&\mbox{if }t^2-4n<0.
\end{cases}
$$
Summing over $c$ and using \eqref{eqn:L1def}, we find by a short
computation that
\begin{equation}\label{eqn:L1psiD}
\sum_{c\mid\ell}L(1,\psi_d)
\frac{c}{\ell}\prod_{p\mid c}\left(1-\frac{\psi_d(p)}p\right)
=L(1,\psi_D).
\end{equation}
Further, we have
$g(u)=f\big(v-\frac{n}v\bigr)$, where $v=\sqrt{|n|}e^{\frac{u}2}$.
Hence
$$
g\!\left(2\log\frac{|t|+\sqrt{D}}{2\sqrt{|n|}}\right)
=f\big(\sqrt{D}\big).
$$

Next we evaluate the integral
$\frac1\pi\int_\R
\frac{e^{-2\arccos(|t|/2\sqrt{|n|})r}}{1+e^{-2\pi r}}h(r)\,dr$. It
occurs only when $D=t^2-4n<0$, so we may assume that $n$ is positive.
Writing $\alpha=\arccos(|t|/2\sqrt{n})$, we have
\begin{align*}
\frac1\pi\int_\R&\frac{e^{-2\alpha r}}{1+e^{-2\pi r}}h(r)\,dr
=\frac1{2\pi}\int_\R\frac{e^{(\pi-2\alpha)r}}{\cosh(\pi r)}h(r)\,dr
=\frac1{2\pi}\int_\R\frac{e^{(\pi-2\alpha)r}}{\cosh(\pi r)}
\int_\R g(u)e^{iru}\,du\,dr\\
&=\frac1{2\pi}\int_\R g(u)
\int_\R\frac{e^{ir(u+i[2\alpha-\pi])r}}{\cosh(\pi r)}\,dr\,du
=\frac1{2\pi}\int_\R\frac{g(u)}{\cosh(\frac{u}2+i[\alpha-\frac{\pi}2])}
\,du\\
&=\frac1{2\pi}\int_\R\frac{g(u)}
{\cosh(\frac{u}2)\sin\alpha-i\sinh(\frac{u}2)\cos\alpha}
\,du\\
&=\frac1{2\pi}\int_\R g(u)
\frac{\cosh(\frac{u}2)\sin\alpha+i\sinh(\frac{u}2)\cos\alpha}
{\sinh^2(\frac{u}2)+\sin^2\alpha}\,du
=\frac1{2\pi}\int_\R\frac{g(u)\cosh(\frac{u}2)\sin\alpha}
{\sinh^2(\frac{u}2)+\sin^2\alpha}\,du,
\end{align*}
where in the last line we make use of the fact that $g$ is even.
Writing $g(u)=q\big(\sinh^2(\frac{u}2)\big)$ and making the
substitution $y=2\sqrt{n}\sinh\frac{u}2$, this becomes simply
$$
\frac{\sqrt{|D|}}\pi\int_\R q\!\left(\frac{y^2}{4n}\right)\frac{dy}{y^2+|D|}
=\frac{\sqrt{|D|}}\pi\int_\R\frac{f(y)}{y^2+|D|}\,dy.
$$
Hence, altogether we have
\begin{equation}\label{eqn:hypsum}
\sum_{c\mid\ell}\hh(\rr[c])
\left[\rr[1]^\times:\rr[c]^\times\right]A(t,n)
=L(1,\psi_D)
\begin{cases}
f\bigl(\sqrt{D}\bigr)&\mbox{if }D>0,\\
\frac{\sqrt{|D|}}\pi\int_\R f(y)\frac{dy}{y^2+|D|}
&\mbox{if }D<0.
\end{cases}
\end{equation}

Next, in the penultimate line of \eqref{eqn:trace1}, we write
$y=\sqrt{|n|}\big(e^{\frac{u}2}-(\sgn{n})e^{-\frac{u}2}\big)$,
so that $g(u)=f(y)$ and
\begin{equation}\label{eqn:ell1}
\frac12\int_{\bigl|\log\frac{a}{\sqrt{|n|}}\bigr|}^\infty 
g(u)\frac{e^{\frac{u}2}+\sgn(n)e^{-\frac{u}2}}
{e^{\frac{u}2}-\sgn(n)e^{-\frac{u}2}
+\Bigl|\frac{a}{\sqrt{|n|}}-\sgn(n)\frac{\sqrt{|n|}}{a}\Bigr|}\,du
=\int_\ell^\infty\frac{f(y)}{\ell+y}\,dy,
\end{equation}
where $\ell=|a-n/a|$,
This term contributes whenever $a^2\ne n$, and those $a$ are in
one-to-one correspondence with the non-zero square values
$D=\ell^2$ in \eqref{eqn:tf1}.
Similarly, we get a contribution of
\begin{equation}\label{eqn:ell2}
\bigl[\log\pi+\log\ell-X(\ell)\bigr]
g\!\left(\log\left|\frac{a^2}{n}\right|\right)
=\Biggl(\log\pi+\sum_{m\mid\ell}\Lambda(m)\bigl(1-m^{-1}\bigr)\Biggr)f(\ell)
\end{equation}
when $D=\ell^2\ne0$.
As for the final line of \eqref{eqn:trace1}, by \eqref{eqn:withpsiint}
and \eqref{eqn:Faint}, it is
$$
\sum_{0<a\mid n}\left[F(a)-
g\!\left(\log\left|\frac{a^2}{n}\right|\right)\log\pi\right]
=\sum_{0<a\mid n}\left[F(a)-f\!\left(a-\frac{n}a\right)\log\pi\right],
$$
and together with \eqref{eqn:ell1} and \eqref{eqn:ell2} we get the
contributions from the sum over $a$ and the non-zero square values of
$D$ in \eqref{eqn:tf1}.

Finally, the terms of \eqref{eqn:trace1} with $\sqrt{n}\in\Z$ correspond to
$D=0$, and they clearly occur only when $n$ is positive.
For any $c>0$, we have
\begin{align*}
g(0)&\log\frac{\pi\sqrt{n}}2+\frac{h(0)}4
-\frac1{2\pi}\int_\R h(r)\psi(1+ir)\,dr\\
&=g(0)\log\frac{\pi e^\gamma\sqrt{n}}2
+\int_0^\infty\log(2\sinh(u/2))g'(u)\,du\\
&=g(0)\log\frac{\pi e^\gamma\sqrt{n}}2
+g(0)\log\!\left(2\sinh{\frac{c}2}\right)
+\int_0^c\frac{g(u)-g(0)}{2\tanh\frac{u}2}\,du
+\int_c^\infty\frac{g(u)}{2\tanh\frac{u}2}\,du.
\end{align*}
Choosing $c$ such that $2\sqrt{n}\sinh\frac{c}2=1$ and making the
substitution $y=2\sqrt{n}\sinh\frac{u}2$, this becomes
\begin{align*}
f(0)\log\frac{\pi e^\gamma}2
&+\int_0^1\frac{f(y)-f(0)}y\,dy
+\int_1^\infty\frac{f(y)}y\,dy\\
&=f(0)\log\frac{\pi e^\gamma}2
+\frac12\int_0^\infty\frac{f(y)+f(y^{-1})-f(0)}y\,dy.
\end{align*}
Similarly, we have
\begin{equation}\label{eqn:idterm}
\begin{aligned}
\frac1{12\sqrt{n}}\int_\R r\tanh(\pi r)h(r)\,dr
&=-\frac1{12\sqrt{n}}\int_\R\frac{g'(u)}{\sinh(u/2)}\,du
=\frac1{12\sqrt{n}}\int_0^\infty\frac{g(0)-g(u)}{\sinh(u/2)\tanh(u/2)}\,du\\
&=\frac16\int_\R\frac{f(0)-f(y)}{y^2}\,dy.
\end{aligned}
\end{equation}
\end{proof}

Next, suppose that $N>1$. In this case it is helpful to restrict
the trace formula to the newforms of level $N$. To be precise,
if $M_1,M_2$ are positive integers such that $M_1M_2\mid N$ and
$M_1\ne N$, then there is a linear map
$L_{M_1,M_2}:\A(M_1,\lambda)\to \A(N,\lambda)$
which sends $f\in\A(M_1,\lambda)$
to the function $z\mapsto f(M_2z)$.
Let $\Anew(N,\lambda)\subseteq\A(N,\lambda)$
denote the ``new'' subspace of forms that are orthogonal
(with respect to the Petersson inner product) to the images of
$L_{M_1,M_2}$ for all $M_1,M_2$.

For $D\in\Z$, let
\begin{equation}\label{eqn:cN0def}
	c_N^\circ(D)
	=
	\begin{cases}
	\frac{\varphi(N)}6&\mbox{if }D=0,\\
	\frac{\Lambda(N)}{(\ell,N^\infty)}&\mbox{if }D=\ell^2\ne0,\\
	c_N(D)&\mbox{otherwise},
	\end{cases}
\end{equation}
where $c_N$ is as defined in \eqref{eqn:cNdef}.
Then the trace formula for level $N$ is as follows.
\begin{proposition}\label{prop:tf2}
Let $n$, $f$ and $h$ be as in Proposition~\ref{prop:tf1},
and let $N>1$ be a squarefree integer with $(n,N)=1$.  Then
\begin{equation}\label{eqn:tf2}
\begin{aligned}
\sum_{\lambda\in\R_{>0}}
&\Tr T_n|_{\Anew(N,\lambda)}
h\Bigl(\sqrt{\lambda-\tfrac14}\Bigr)\\
&=\sum_{\substack{t\in\Z\\D=t^2-4n}}
c_N^\circ(D)
\begin{cases}
f\bigl(\sqrt{D}\bigr)&\mbox{if }D>0,\\
	\tfrac{\sqrt{|D|}}{\pi} \int_{\R} \frac{f(y)}{y^2+|D|} \, dy
&\mbox{if }D<0,\\
\int_\R\frac{f(0)-f(y)}{y^2}\,dy
&\mbox{if }D=0
\end{cases}\\
&\qquad-\mu(N)\frac{\sigma_1(|n|)}{\sqrt{|n|}}
h\!\left(\frac{i}2\right)
-2\Lambda(N)\sum_{\substack{a\in\Z_{>0}\\a\mid n}}
\sum_{r=0}^\infty N^{-r}f\!\left(aN^r-\frac{n}{aN^r}\right).
\end{aligned}
\end{equation}
\end{proposition}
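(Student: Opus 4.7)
The plan is Möbius inversion over divisors of $N$. For squarefree $N$ with $(n,N)=1$ and each $\lambda>0$, the level-raising maps $L_{M,d}\colon\Anew(M,\lambda)\hookrightarrow\A(N,\lambda)$, $M\mid N$, $d\mid N/M$, are $T_n$-equivariant with linearly independent images summing to $\A(N,\lambda)$, so
\begin{equation*}
\Tr T_n|_{\A(N,\lambda)}
=\sum_{M\mid N}2^{\omega(N/M)}\Tr T_n|_{\Anew(M,\lambda)}.
\end{equation*}
The Dirichlet inverse of $k\mapsto 2^{\omega(k)}$ on squarefree arguments is $k\mapsto(-2)^{\omega(k)}$, so inverting gives
\begin{equation*}
\Tr T_n|_{\Anew(N,\lambda)}
=\sum_{M\mid N}(-2)^{\omega(N/M)}\Tr T_n|_{\A(M,\lambda)}
\quad(\lambda>0).
\end{equation*}

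Next, I would extend Proposition~\ref{prop:tf1} to each squarefree level $M\mid N$, applying the same manipulations that produced \eqref{eqn:tf1} to Str\"ombergsson's general formula in \cite{str}; this yields $\sum_{\lambda\ge0}\Tr T_n|_{\A(M,\lambda)}h(\sqrt{\lambda-\tfrac14})=G_M(n)$ for an explicit geometric side $G_M(n)$. The $\lambda=0$ constant-function contributes $\sigma_1(|n|)/\sqrt{|n|}\cdot h(i/2)$ at every level $M$ (since $T_n\cdot 1=\sigma_1(|n|)/\sqrt{|n|}$), so subtracting this before forming the Möbius-weighted combination yields
\begin{equation*}
\sum_{\lambda>0}\Tr T_n|_{\Anew(N,\lambda)}h\!\left(\sqrt{\lambda-\tfrac14}\right)
=\sum_{M\mid N}(-2)^{\omega(N/M)}G_M(n)
-\mu(N)\frac{\sigma_1(|n|)}{\sqrt{|n|}}h\!\left(\frac{i}{2}\right),
\end{equation*}
using $\sum_{M\mid N}(-2)^{\omega(N/M)}=\prod_{p\mid N}(-1)=\mu(N)$. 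This accounts for the second term on the right-hand side of \eqref{eqn:tf2}.

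It remains to reorganize $\sum_{M\mid N}(-2)^{\omega(N/M)}G_M(n)$ as the $t$-sum with weights $c_N^\circ(D)$ plus the final $\Lambda(N)$ series. Each piece of $G_M$ is multiplicative in $M$: the hyperbolic/elliptic weight at $D=t^2-4n$ (with $\sqrt{D}\notin\Z$) factors across primes of the level via local factors involving $\psi_D$; the identity term is proportional to $[\SL_2(\Z):\Gamma_0(M)]=M\prod_{p\mid M}(1+p^{-1})$; and the square-hyperbolic weight at $D=\ell^2\ne0$ factors through $\Lambda$-type local factors. For each $p\mid N$, the signed combination of local factors at $p$ telescopes — in the hyperbolic/elliptic case to precisely $\psi_{D/m^2}(p)-1$ with $m=(N^\infty,\ell)$, which multiplies across $p\mid N$ to recover the Euler product in \eqref{eqn:cNdef}, giving $c_N(D)$. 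Analogously, $\sum_{M\mid N}(-2)^{\omega(N/M)}[\SL_2(\Z):\Gamma_0(M)]=\prod_{p\mid N}(p-1)=\varphi(N)$, producing $c_N^\circ(0)=\varphi(N)/6$, and the square case yields $c_N^\circ(\ell^2)=\Lambda(N)/(\ell,N^\infty)$.

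The main obstacle is the Eisenstein/parabolic contribution: the integrals $\frac{1}{4\pi}\int_\R h(r)\phi_M'/\phi_M(\tfrac12+ir)\,dr$, the $F(a)$-sums from \eqref{eqn:Fdef}, and the residual $h(0)$ pieces at each level $M$ must collapse, under the Möbius-weighted combination, to $-2\Lambda(N)\sum_{a\mid n}\sum_{r\ge0}N^{-r}f(aN^r-n/(aN^r))$. The scattering determinant at squarefree $M$ factors as an Euler product over primes $p\mid M$, so its logarithmic derivative splits additively; the Möbius-weighted sum then collapses all but one prime's contribution, leaving a $\log p=\Lambda(N)$ factor that is non-zero only when $N$ itself is prime. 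Carrying this out rigorously — including the interchange of the contour integral with the sum over $M$, and matching the resulting geometric series in $N^{-r}$ against the explicit $f$-values appearing — is the most delicate piece of the argument, though ultimately a mechanical local calculation at each prime dividing $N$.
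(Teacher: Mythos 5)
Your overall strategy is legitimate but it is not the paper's, and as written it has a real gap rather than just missing polish. The paper does not sieve at all: it quotes Str\"ombergsson's trace formula for \emph{newforms} on $\Gamma_0(N)$ (the analogue of \eqref{eqn:trace1} in \cite[\S2.2]{str}, already M\"obius-sieved), and the whole proof of Proposition~\ref{prop:tf2} consists of rewriting its terms --- the restricted class-number sums become $c_N^\circ(D)$ via Dirichlet's class number formula and \eqref{eqn:L1psiD}, the identity term is handled by \eqref{eqn:idterm}, and the remaining parabolic-type sums are matched directly with the last line of \eqref{eqn:tf2}. You instead propose to re-derive the newform formula from full-spectrum formulas at every squarefree level $M\mid N$. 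Your bookkeeping of the easy parts is correct: the oldform decomposition gives $\Tr T_n|_{\A(N,\lambda)}=\sum_{M\mid N}2^{\omega(N/M)}\Tr T_n|_{\Anew(M,\lambda)}$ for $\lambda>0$ and $(n,N)=1$, the Dirichlet inverse of $2^{\omega}$ on squarefree arguments is $(-2)^{\omega}$, and the residual ($\lambda=0$) contribution correctly produces the $-\mu(N)\frac{\sigma_1(|n|)}{\sqrt{|n|}}h(i/2)$ term.

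The gap is that everything after that is asserted rather than proved, and what is asserted is precisely the content of the proposition. Your argument needs, as input, an explicit geometric side $G_M(n)$ for every squarefree level $M\mid N$, and you never write one down; Proposition~\ref{prop:tf1} covers only $M=1$. At composite squarefree $M$ the formula is substantially more structured than \eqref{eqn:trace1}: the hyperbolic/elliptic weights carry congruence conditions at the primes $p\mid M$ (local solution counts interacting with $p\mid\ell$, which is exactly why $c_N$ in \eqref{eqn:cNdef} is built from $D/m^2$ with $m=(N^\infty,\ell)$ and a factor $m^{-1}$); the square-discriminant terms carry level-dependent weights and an integral term $\int_\ell^\infty\frac{f(y)}{\ell+y}\,dy$ that must cancel identically in the signed combination (it is absent from \eqref{eqn:tf2}); and the Eisenstein contribution is not a single scalar $\frac{\phi'}{\phi}$ integral but a $2^{\omega(M)}\times2^{\omega(M)}$ scattering matrix paired with the Hecke action on the Eisenstein series attached to the cusps, so the claim that ``the logarithmic derivative splits additively and the M\"obius-weighted sum collapses'' to $-2\Lambda(N)\sum_{a\mid n}\sum_{r\ge0}N^{-r}f\bigl(aN^r-\frac{n}{aN^r}\bigr)$ is exactly the delicate computation you defer. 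Until those local telescopings (to $\psi_{D/m^2}(p)-1$, to $\varphi(N)/6$, to $\Lambda(N)/(\ell,N^\infty)$, and to the $N^{-r}$ geometric series) are actually carried out from written-down level-$M$ formulas, the proposal is a plan for a proof --- essentially a re-derivation of Str\"ombergsson's \S2.2 --- rather than a proof of \eqref{eqn:tf2}.
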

\begin{proof}
Specializing the formula in \cite[\S2.2]{str} to trivial nebentypus character,
we have the following trace formula for newforms on $\Gamma_0(N)$, with
notation as in \eqref{eqn:trace1}:
\begin{equation}\label{eqn:traceN}
\begin{aligned}
&\frac{\mu(N)\sigma_1(|n|)}{\sqrt{|n|}}h\!\left(\frac{i}2\right)
+\sum_{\lambda\in\R_{>0}}\Tr T_n|_{\Anew(N,\lambda)}
h\Bigl(\sqrt{\lambda-\tfrac14}\Bigr)\\
&=\sum_{\substack{t\in\Z\\\sqrt{t^2-4n}\notin\Z}} 
\biggl(\sum_{\substack{c\mid\ell\\(c,N)=1}}\hh(\rr[c])
\left[\rr[1]^\times:\rr[c]^\times\right]
\prod_{p\mid N}\left[\left(\frac{d}{p}\right)-1\right]
\biggr)A(t,n)\\
&+\begin{cases}
\frac{\varphi(N)}{12\sqrt{n}}\int_\R r\tanh(\pi r)h(r)\,dr
&\mbox{if }\sqrt{n}\in\Z,\\
0&\mbox{otherwise}
\end{cases}\\
&+\Lambda(N)\sum_{\substack{a\in\Z_{>0}\\a\mid n,\,a^2\ne n}}
\frac{g\!\left(\log\left|\frac{a^2}{n}\right|\right)}{(N^\infty,|a-n/a|)}
-2\Lambda(N)\sum_{\substack{a\in\Z_{>0}\\a\mid n}}\sum_{r=0}^\infty
N^{-r}g\!\left(\log\left|\frac{a^2}{n}\right|-2r\log N\right).
\end{aligned}
\end{equation}
Applying \eqref{eqn:L1psiD} with $D$ replaced by $D/(N^\infty,\ell)$
and comparing to the definition \eqref{eqn:cNdef},
we find that
$$
\sum_{\substack{c\mid\ell\\(c,N)=1}}L(1,\psi_d)
\frac{c}{\ell}\prod_{p\mid c}\left(1-\frac{\psi_d(p)}p\right)
\cdot\prod_{p\mid N}\left[\left(\frac{d}{p}\right)-1\right]
=c_N^\circ(D).
$$
Hence, following the derivation of \eqref{eqn:hypsum}, we get
\begin{align*}
\sum_{\substack{c\mid\ell\\(c,N)=1}}\hh(\rr[c])
&\left[\rr[1]^\times:\rr[c]^\times\right]
\prod_{p\mid N}\left[\left(\frac{d}{p}\right)-1\right]
A(t,n)\\
&=c_N^\circ(D)\begin{cases}
f\bigl(\sqrt{D}\bigr)&\mbox{if }D>0,\\
%\frac1\pi\int_{-\pi/2}^{\pi/2}f\bigl(\sqrt{|D|}\tan\theta\bigr)\,d\theta
	\tfrac{\sqrt{|D|}}{\pi} \int_{\R} \frac{f(y)}{|D|+y^2} \, dy
&\mbox{if }D<0.
\end{cases}
\end{align*}
When $n$ is a square,
the corresponding term of \eqref{eqn:traceN} is,
by \eqref{eqn:idterm},
$$
\frac{\varphi(N)}6\int_\R\frac{f(0)-f(y)}{y^2}\,dy,
$$
and this matches the contribution to \eqref{eqn:tf2} from $D=0$.
Similarly, the terms of \eqref{eqn:tf2} corresponding to $D=\ell^2\ne0$
match the first sum on the last line of \eqref{eqn:traceN}. 
\end{proof}

\section{Specialization of the test function}\label{sec:specialization}
In this section, we compute the terms of Propositions~\ref{prop:tf1}
and \ref{prop:tf2} explicitly for $q(v)=[4(v+1)]^{-s}$.
We change notation slightly, replacing $n$ by $\pm n$,
where $n\in\Z_{>0}$.
\begin{proposition}\label{prop:specialization_N=1}
Let $s\in\C$ with $\Re(s)>\frac12$ and $n\in\Z_{>0}$.
Define
$$
\Phi(x,s)=\begin{cases}
\frac1{s\Beta(s,\frac12)}&\mbox{if }x=0,\\
x^{-s}I_x\bigl(s,\tfrac12\bigr)&\mbox{if }0<x<1,\\
x^{-s}&\mbox{if }x\ge 1
\end{cases}
$$
and
$$
	\Psi(x, s) 
	=
	\int_{\sqrt{x-1}}^\infty \frac{(y^2+1)^{-s}}{y+\sqrt{x-1}} \, dy, 
$$
where $\Beta(a,b)=\frac{\Gamma(a)\Gamma(b)}{\Gamma(a+b)}$ denotes
the Euler Beta-function and
$$
I_x(a,b)=\frac1{\Beta(a,b)}\int_0^x t^{a-1}(1-t)^{b-1}\,dt
$$
is the normalized incomplete Beta-function.
Then
\begin{equation}\label{eqn:level1plus}
\begin{aligned}
	&\sum_{\substack{\lambda\in\R_{>0}\\\lambda=\frac14+r^2}}	
	\Tr T_n|_{\A(1,\lambda)}\Beta(s+ir,s-ir)
	+
	\frac{\sigma_{1}(n)}{\sqrt{n}} 
	\Beta\!\left(s-\frac{1}{2}, s+\frac{1}{2}\right)
	\\
	&-\frac1{4\pi}\int_\R\Beta(s-ir,s+ir)
\frac{\sigma_{2ir}(n)}{n^{ir}}
\frac{\phi'}{\phi}\!\left(\frac12+ir\right)dr
-\frac{\sigma_0(n)}4\Beta(s,s)\\
&=4^{-s}\sum_{\substack{t\in\Z\\D=t^2-4n}}
\begin{cases}
	L(1,\psi_D)\Phi\bigl(\frac{t^2}{4n},s\bigr)
		&\mbox{if }\sqrt{D}\notin\Z,\\
	\sum_{m\mid \sqrt{D}} \Lambda(m)(1-m^{-1}) \Phi\bigl(\frac{t^2}{4n}, s\bigr) + \Psi\bigl(\frac{t^2}{4n}, s\bigr)
		&\mbox{if }0\ne\sqrt{D}\in\Z,\\
	\tfrac12\bigl(\psi(s)+\gamma+\log{n}\bigr) +\tfrac16\sqrt{\frac{\pi}n}\frac{\Gamma(s+\frac12)}{\Gamma(s)}
		&\mbox{if }D=0,
\end{cases}
\end{aligned}
\end{equation}
and
\begin{equation}\label{eqn:level1minus}
\begin{aligned}
	&\sum_{\substack{\lambda\in\R_{>0}\\\lambda=\frac14+r^2}}
	\Tr T_{-n}|_{\A(1,\lambda)}\Beta(s+ir,s-ir)
	+
	\frac{\sigma_{1}(n)}{\sqrt{n}} 
	\Beta\!\left(s-\frac{1}{2}, s+\frac{1}{2}\right)
	\\
	&-\frac1{4\pi}\int_\R\Beta(s-ir,s+ir)
	\frac{\sigma_{2ir}(n)}{n^{ir}}
	\frac{\phi'}{\phi}\!\left(\frac12+ir\right)dr
	-\frac{\sigma_0(n)}4\Beta(s,s)\\
	&=\sum_{\substack{t\in\Z\\D=t^2+4n}}\left(\frac{n}{D}\right)^s
\begin{cases}
L(1,\psi_D)&\mbox{if }\sqrt{D}\notin\Z,\\
\sum_{m\mid\sqrt{D}}\Lambda(m)\bigl(1-m^{-1}\bigr)
+\tfrac12\bigl(\psi(s+\tfrac12)-\psi(s)\bigr)
&\mbox{if }\sqrt{D}\in\Z.
\end{cases}
\end{aligned}
\end{equation}
%%%%%
Both \eqref{eqn:level1plus} and \eqref{eqn:level1minus} continue
to meromorphic functions on $\C$ and are holomorphic for
$\Re(s)>0$, apart from simple poles of residue
$\frac{\sigma_1(n)}{\sqrt{n}}$ at $s=\tfrac12$.
\end{proposition}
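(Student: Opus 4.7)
The plan is to specialize Proposition~\ref{prop:tf1} to $q(v)=[4(v+1)]^{-s}$ and compute each resulting term in closed form. First I would verify that $h(r)=\Beta(s+ir,s-ir)$: since $q(\sinh^2(u/2))=[2\cosh(u/2)]^{-2s}$, the substitution $u=2w$ reduces the defining integral to the classical Beta transform $\int_{\R}\cosh(w)^{-2s}e^{2irw}\,dw=2^{2s-1}\Beta(s+ir,s-ir)$. The spectral and scattering terms on the LHS of \eqref{eqn:level1plus} and \eqref{eqn:level1minus} are then exactly what one obtains by substituting this $h$ into the spectral side of \eqref{eqn:tf1} together with the identity \eqref{eqn:Faint}.

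Next I would compute $f$ explicitly, namely $f(y)=n^s(y^2+4n)^{-s}$ in the $T_n$ case and $f(y)=n^s y^{-2s}$ in the $T_{-n}$ case, and substitute into each branch of $W(D)$. The case $D>0$ with $\sqrt{D}\notin\Z$ is immediate, and the case $\sqrt{D}\in\Z$ is handled by the identity $\int_1^\infty w^{-2s}/(w+1)\,dw=\tfrac12(\psi(s+\tfrac12)-\psi(s))$ obtained from the substitution $y=\sqrt{D}\,w$. The case $D=0$ (which occurs only in the $T_n$ formula, with $n$ a square) reduces by the same sort of change of variable to standard integral representations of $\psi(s)$, of $\log$, and of $\Gamma(s+\tfrac12)/\Gamma(s)$. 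The most delicate case is $D<0$: after the substitution $y=2\sqrt{n}\,u$ and writing $x=t^2/(4n)\in[0,1)$, the integral $\tfrac{\sqrt{|D|}}{\pi}\int_{\R}f(y)/(y^2+|D|)\,dy$ takes a form that coincides with $x^{-s}I_x(s,\tfrac12)$ via the classical integral representation of the incomplete Beta function; this is what produces the middle branch of $\Phi(x,s)$. Plugging in and simplifying yields \eqref{eqn:level1plus} and \eqref{eqn:level1minus} throughout the half-plane of absolute convergence.

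For the meromorphic continuation and the pole at $s=\tfrac12$, I would analyse the LHS as a sum of explicit meromorphic functions of $s\in\C$. Each summand $\Beta(s+ir_j,s-ir_j)\Tr T_n|_{\A(1,1/4+r_j^2)}$ is meromorphic with poles only at $s=\pm ir_j-k$, $k\in\Z_{\ge 0}$, all lying in $\Re(s)\le 0$ for real $r_j$, while $\Beta(s,s)=\Gamma(s)^2/\Gamma(2s)$ and the scattering integral $\tfrac1{4\pi}\int_{\R}\Beta(s-ir,s+ir)\tfrac{\sigma_{2ir}(n)}{n^{ir}}\tfrac{\phi'}{\phi}(\tfrac12+ir)\,dr$ are holomorphic on $\Re(s)>0$ by the exponential decay $|\Gamma(s\pm ir)|\ll e^{-\pi|r|/2}$ from Stirling (combined with standard bounds on $\phi'/\phi$ on the critical line). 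The only source of a pole in $\Re(s)>0$ is therefore $\Beta(s-\tfrac12,s+\tfrac12)=\Gamma(s-\tfrac12)\Gamma(s+\tfrac12)/\Gamma(2s)$, which is simple at $s=\tfrac12$ with residue $1$, yielding the stated residue $\sigma_1(n)/\sqrt{n}$. Because the identity LHS $=$ RHS is already established on a right half-plane, analytic continuation extends the RHS to a meromorphic function on $\C$ with precisely this pole structure on $\Re(s)>0$.

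The main obstacle I expect is the $D<0$ evaluation, where one must recognise the resulting integral as $x^{-s}I_x(s,\tfrac12)$ on the correct range $x\in(0,1)$ and match it cleanly with the two adjacent branches of $\Phi(x,s)$ at $x=0$ and $x\ge 1$. A secondary technicality is justifying the local uniform convergence of the spectral sum and the scattering integral on $\Re(s)>0$, which reduces to standard estimates and Weyl's law but must be invoked explicitly to conclude that no other poles intrude.
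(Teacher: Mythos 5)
Your computation of the identity itself follows the paper's route: $h(r)=\Beta(s+ir,s-ir)$, the explicit $f$, the digamma identity for the square-discriminant case of \eqref{eqn:level1minus}, and the identification of the $D<0$ term with $4^{-s}x^{-s}I_x(s,\frac12)$ (the paper gets there via \cite[3.197(1)]{GR} and the ${}_2F_1$--incomplete-Beta relation) are all sound, and the pole at $s=\frac12$ with residue $\sigma_1(n)/\sqrt{n}$ coming from $\Beta(s-\frac12,s+\frac12)$ is correctly isolated, granted the locally uniform convergence you flag. (A small imprecision: in \eqref{eqn:level1plus} the square-discriminant integral is not evaluated by the digamma identity; it is simply rescaled into $\Psi$, which is harmless.)

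The genuine gap is the meromorphic continuation to all of $\C$. Your closing step --- ``because the identity is established on a right half-plane, analytic continuation extends the RHS to a meromorphic function on $\C$'' --- is circular: analytic continuation can only be invoked once \emph{some} representation of the function is shown to continue, and you have only established that the left-hand side is meromorphic on $\Re(s)>0$. The problematic term is the scattering integral $\frac1{4\pi}\int_\R\Beta(s-ir,s+ir)\frac{\sigma_{2ir}(n)}{n^{ir}}\frac{\phi'}{\phi}\bigl(\frac12+ir\bigr)\,dr$: for $\Re(s)\le0$ the poles of $\Gamma(s\pm ir)$ collide with the contour, so the integral does not define the continuation, and the right-hand Dirichlet series only converges for $\Re(s)>\frac12$, so neither side comes for free. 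The paper's proof spends its final part precisely on this: substitute $u=ir$, deform the contour around $u=0$, use the functional equation to fold the $\frac{{\zeta^*}'}{\zeta^*}(2u)$ piece onto the $\frac{{\zeta^*}'}{\zeta^*}(1+2u)$ piece (producing the $-\frac14\Beta(s,s)\sigma_0(n)$ term), then replace $u$ by $u+s$ and push the contour to $\Re(u)=M-\frac12$, collecting residues; this yields continuation to $\Re(s)>\frac14-\frac{M}2$ for every $M$, and in particular exhibits poles at shifts of the zeros of $\zeta^*$ in $\Re(s)\le0$, which are used later in the proof of Theorem~\ref{thm:main1}. Without an argument of this kind (and the corresponding remark that the Stirling/Weyl estimates give locally uniform convergence of the spectral sum on compacta of all of $\C$, not just $\Re(s)>0$), the continuation claim in the proposition is unproven.
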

\begin{proof}
With $q(v)=\bigl[4(1+v)\bigr]^{-s}$ we have
\begin{equation}\label{eqn:hs}
\begin{aligned}
h(r)&=\int_\R q\bigl(\sinh^2\bigl(\tfrac{u}2\bigr)\bigr)e^{iru}\,du
=\int_\R\bigl(2\cosh\tfrac{u}2\bigr)^{-2s}e^{iru}\,du
=\int_\R e^{u(s+ir)}\bigl(e^u+1\bigr)^{-2s}\,du\\
&=\int_0^\infty x^{s+ir}(x+1)^{-2s}\,\frac{dx}x
=\Beta(s+ir,s-ir),
\end{aligned}
\end{equation}
by \cite[3.194(3)]{GR}. 
By Stirling's formula, for any compact set $K\subset\C$ that omits all poles of
$\Beta(s+ir,s-ir)$, the estimate
$|h(r)| = \left|\Beta(s+ir, s-ir)\right| \ll e^{-\pi|r|}$
holds uniformly for $s\in K$.
Hence this is a suitable choice of test function for any fixed $s$ with
$\Re(s)>\frac12$.
Further, when combined with the Weyl-type estimate
$$
\sum_{\substack{\lambda=\frac14+r^2\in\R_{>0}\\|r|\le T}}
	\big|\Tr T_{\pm n}|_{\A(1,\lambda)}\big|\ll_n T^2,
$$
we see that the sums on the left-hand sides of \eqref{eqn:level1plus} and
\eqref{eqn:level1minus} continue to meromorphic functions of $s\in\C$.
By \eqref{eqn:Faint},
\eqref{eqn:level1plus} and \eqref{eqn:level1minus} are
$$
\sum_{\lambda\in\R_{\ge0}}\Tr T_{\pm n}|_{\A(1,\lambda)}
h\Bigl(\sqrt{\lambda-\tfrac14}\Bigr)
-\sum_{a\mid n}F(a),
$$
and it remains to evaluate $\sum_{t\in\Z}W(t^2\mp4n)$.

Let us first consider \eqref{eqn:level1minus}. Then
$f(y)=n^s|y|^{-2s}$, and we have $D=t^2+4n>0$.
Making the substitution $y=\sqrt{D/x}$, we get
\begin{align*}
\int_{\sqrt{D}}^\infty\frac{y^{-2s}}{y+\sqrt{D}}\,dy
=\frac{D^{-s}}2\int_0^1\frac{x^{s-1}}{1+\sqrt{x}}\,dx
=\frac{D^{-s}}2\int_0^1\frac{x^{s-1}-x^{s-\frac12}}{1-x}\,dx
=\frac{D^{-s}}2\bigl(\psi(s+\tfrac12)-\psi(s)\bigr),
\end{align*}
by \cite[8.361(4)]{GR}.
This yields the right-hand side of \eqref{eqn:level1minus}.

Next we consider \eqref{eqn:level1plus}, in which case
$f(y)=n^s(y^2+4n)^{-s}$ and we have $D=t^2-4n$. 
For $D<0$, 
\begin{multline*} 
	\int_\R \frac{f(y)}{y^2+|D|} \, dy
	=
	n^s 2 \int_0^\infty \frac{(y^2+4n)^{-s}}{y^2+|D|} \, dy
	=
	n^s \int_0^\infty (y+4n)^{-s} (y+|D|)^{-1} y^{-\frac{1}{2}} \, dy
	\\
	=
	n^s (4n)^{-s} |D|^{-\frac{1}{2}} 
	\Beta\!\left(\frac{1}{2}, s+\frac{1}{2}\right)
	{}_2F_1\!\left(s, \frac{1}{2}; s+1; 1-\frac{|D|}{4n}\right)
	\\
	=
	n^s (4n)^{-s} |D|^{-\frac{1}{2}} 
	\Beta\!\left(\frac{1}{2}, s+\frac{1}{2}\right)
	s
	\Beta_{\frac{t^2}{4n}}\!\left(s, \frac{1}{2}\right)\left(\frac{t^2}{4n}\right)^{-s},  
\end{multline*}
by \cite[3.197(1)]{GR}. 
Since
$$
	s\Beta\!\left(\frac{1}{2}, s+\frac{1}{2}\right)
	=
	\frac{\sqrt{\pi} \Gamma(s+\frac{1}{2})}{\Gamma(s+1)}
	=
	\frac{\pi\Gamma(s+\frac{1}{2})}{\Gamma(s)\Gamma(\frac{1}{2})}
	=
	\frac{\pi}{\Beta(s, \frac{1}{2})}, 
$$
we obtain
\begin{align}\label{e:fy_|D|}
	\frac{\sqrt{|D}}{\pi}
	\int_\R \frac{f(y)}{y^2+|D|} \, dy
	=
	4^{-s}
	I_{\frac{t^2}{4n}}\!\left(s, \frac{1}{2}\right)\left(\frac{t^2}{4n}\right)^{-s}
	.
\end{align}

For a sufficiently small $\varepsilon>0$, we have
\begin{equation}\label{e:fy}
	f(y) = 
	n^s(y^2+4n)^{-s}
	=
	y^{-2s} n^s 
	\frac{1}{2\pi i} \int_{\Re(u) =-\varepsilon} 
	\Beta(-u, s+u)
	y^{-2u} (4n)^{u}
	\, du
	,
\end{equation}
by \cite[6.422(3)]{GR}. 
Hence, for $0\neq \sqrt{D}\in \Z$, 
\begin{multline*}
	\int_{\sqrt{D}}^\infty \frac{f(y)}{y+\sqrt{D}} \, dy
	=
	n^s
	\frac{1}{2\pi i} \int_{\Re(u) =-\varepsilon} 
	\Beta(-u,s+u)
	\int_{\sqrt{D}}^\infty 
	\frac{y^{-2u-2s} }{y+\sqrt{D}} \, dy
	\, 
	(4n)^{u}
	\, du
	\\
	=
	2^{-2s-1}
	\frac{1}{2\pi i} \int_{\Re(u) = -\varepsilon} \Beta(-u, s+u)
	\left(\frac{4n}{D}\right)^{s+u}
	\left(\psi\!\left(u+s+\frac{1}{2}\right)-\psi(u+s)\right)
	du
	.
\end{multline*}
%%%%
For $D=0$, 
\begin{multline*}
	\int_0^\infty \frac{f(0)-f(y)}{y^2} \, dy
	=
	n^s 
	\int_0^\infty \frac{(4n)^{-s} - (y^2+4n)^{-s}}{y^2} \, dy
	=
	\frac{4^{-s}}{\sqrt{4n}}
	\int_0^\infty \frac{1- (y^2+1)^{-s}}{y^2} \, dy
	\\
	=
	\frac{4^{-s}}{\sqrt{4n}}
	\left\{
	\left[-y^{-1}\left(1-(y^2+1)^{-s}\right)\right]_0^\infty 
	+ 2s\int_0^\infty (y^2+1)^{-s-1} \, dy
	\right\}
	.
\end{multline*}
%%%%%
By \cite[3.251(2)]{GR}, 
$$
	\int_0^\infty (y^2+1)^{-s-1} \, dy
	=
	\frac{1}{2} \Beta\!\left(\frac{1}{2}, s+\frac{1}{2}\right),
$$
so that
\begin{equation}\label{e:f0fy}
	\int_0^\infty \frac{f(0)-f(y)}{y^2} \, dy
	=
	\frac{4^{-s}}{\sqrt{4n}}
	s \Beta\!\left(\frac{1}{2}, s+\frac{1}{2}\right)
	=
	\frac{4^{-s}\sqrt{\pi}}{\sqrt{4n}}
	\frac{\Gamma(s+\frac{1}{2})}{\Gamma(s)}
	.
\end{equation}
%%%%%

Next, we have
\begin{multline*}
	\int_0^\infty \frac{f(y)+f(y^{-1})-f(0)}{y} \, dy
	=
	2 \int_0^1 \frac{f(y)+f(y^{-1})-f(0)}{y} \, dy
	\\
	=
	2 \int_0^1 \int_0^1 f'(ty) \, dt \, dy 
	+ 2\int_0^1 \frac{f(y^{-1})}{y} \, dy
%	\\
%	=
%	2n^s \int_0^1 \frac{(y^2+4n)^{-s}+(y^{-2}+4n)^{-s} - (4n)^{-s}}{y} \, dy
%	\\
%	=
%	2n^s \int_0^1 \frac{(y^2+4n)^{-s}-(4n)^{-s}}{y} \, dy
%	+
%	2n^s \int_0^1y^{2s-1}(1+4ny^2)^{-s} \, dy
	.
\end{multline*}
By \eqref{e:fy}, we have
\begin{multline*}
	f'(ty)
	=
	n^s 
	\frac{1}{2\pi i} \int_{\Re(u) =-\varepsilon} 
	\Beta(-u, s+u)
	(-2s-2u)
	(ty)^{-2s-2u-1} (4n)^{u}
	\, du
	\\
	=
	n^s 
	\frac{1}{2\pi i} \int_{\Re(u) =-\varepsilon-\Re(s)} 
	\Beta(-u, s+u)
	(-2s-2u)
	(ty)^{-2s-2u-1} (4n)^{u}
	\, du
	, 
\end{multline*}
upon moving the line of integration to $\Re(u) = -\varepsilon-\Re(s)$, so that
$-\Re(u)-\Re(s)>0$.
Therefore,
\begin{multline*}
	2\int_0^1 \int_0^1 f'(ty) \, dt \, dy 
	=
	n^s(4n)^{-s} 
	\frac{1}{2\pi i} \int_{\Re(u) =-\varepsilon-\Re(s)} 
	\frac{\Beta(-u, s+u)}{-u-s}
	(4n)^{u+s}
	\, du
	\\
	=
	4^{-s}\big(\psi(s)-\gamma+\log(4n)\big)
	-
	n^s (4n)^{-s}
	\int_{0}^{4n}
	\frac{1}{2\pi i} \int_{\Re(u) =-\varepsilon} 
	\Beta(-u, s+u)
	t^{u+s-1}
	\, du \, dt
	\\
	=
	4^{-s}\big(\psi(s)-\gamma+\log(4n)\big)
	-
	4^{-s}
	\int_{0}^{4n}
	(t+1)^{-s} t^{s-1}
	\, dt,
\end{multline*}
by \cite[6.422(3)]{GR}, for $\Re(s) > 0$. 
On the other hand,
$$
	2\int_0^1 \frac{f(y^{-1})}{y}\,dy
	=
	2n^s \int_0^1 (1+4ny^2)^{-s} y^{2s-1}\,dy
	=
	4^{-s}
	\int_{0}^{4n}
	(t+1)^{-s} t^{s-1}
	\,dt,
$$
and thus
$$
	\int_0^\infty \frac{f(y)+f(y^{-1})-f(0)}{y} \, dy
	=
	4^{-s}\big(\psi(s)-\gamma+\log(4n)\big)
	.
$$
%%%%%

It remains only to prove that the integral 
$$
	F(s) = \frac{1}{4\pi} \int_\R \Beta(s-ir, s+ir) \frac{\sigma_{2ir}(n)}{n^{ir}}
	\frac{\phi'}{\phi}\!\left(\frac{1}{2}+ir\right) dr
$$
has meromorphic continuation to $s\in \C$. 
%%%%%
Clearly $F(s)$ is analytic for $\Re(s)>0$. 
To get meromorphic continuation to $\Re(s) \leq 0$, we put $u=ir$
and then deform the contour around $u=0$:
$$
	F(s) 
	= 
	\frac{1}{4\pi i} \int_{C_0} \Beta(s-u, s+u) \frac{\sigma_{2u}(n)}{n^{u}} 
	\left(-\frac{{\zeta^*}'}{\zeta^*}(2u) + \frac{{\zeta^*}'}{\zeta^*}(1+2u)\right)
	du, 
$$
where 
$
	C_0 = \left\{u=it:|t|\geq \frac{1}{2}\right\} \cup \left\{ u\in \C:|u|=\frac{1}{2}, \; \Re(u)\geq 0\right\}
$.
Now we replace $u$ by $-u$ in the half of the integral containing
$\frac{{\zeta^*}'}{\zeta^*}(2u)$ and move the contour back to $C_0$: 
$$
	F(s) 
	= 
	-\frac{1}{4} \Beta(s, s) \sigma_0(n) 
	+
	\frac{1}{2\pi i} \int_{C_0} \Beta(s-u, s+u) \frac{\sigma_{2u}(n)}{n^{u}} 
	\frac{{\zeta^*}'}{\zeta^*}(1+2u)
	\, du. 
$$
%%%%%
Now let $M\in\Z_{\ge0}$, replace $u$ by $u+s$, and shift the contour to
$\Re(u)=M-\frac12$. Then we have
\begin{multline*}
	F(s)
	=
	\sum_{m=0}^{M-1} 
	\frac{(-1)^m}{m!} 
	\frac{\Gamma(2s+m)}{\Gamma(2s)}
	\frac{\sigma_{2s+2m}(n)}{n^{s+m}} \frac{{\zeta^*}'}{\zeta^*}(1+2s+2m)
	\\
	-\frac{1}{4} B(s, s) \sigma_0(n) 
	+
	\frac{1}{2\pi i} \int_{\Re(u)=M-\frac12} \Beta(-u, 2s+u) \frac{\sigma_{2s+2u}(n)}{n^{s+u}} 
	\frac{{\zeta^*}'}{\zeta^*}(1+2s+2u)
	\, du,
\end{multline*}
and this last line continues meromorphically to $\Re(s)>\frac14-\frac{M}2$. Taking $M$
arbitarily large, we conclude the meromorphic continuation of $F(s)$ to $\C$.
%%%

\end{proof}
%%%%%

\begin{proposition}\label{prop:specialization_N>1}
Let $N>1$ be a squarefree integer, $n\in\Z_{>0}$ with $(n,N)=1$
and $s\in\C$ with $\Re(s)>\frac12$.
%%%%%
Then
\begin{multline}\label{eqn:levelNplus}
	\sum_{\substack{\lambda\in\R_{>0}\\\lambda=\tfrac14+r^2}}
	\Tr T_n|_{\Anew(N, \lambda)} \Beta(s+ir, s-ir)
	+
	\mu(N) \frac{\sigma_1(n)}{\sqrt{n}} \Beta\!\left(s-\frac12, s+\frac12\right)
	\\
	+
	2\Lambda(N)
	\frac{1}{2\pi}\int_{-\infty}^\infty \Beta(s-ir, s+ir)
	\sigma_{-2ir}(n) n^{ir}
	\left(1-N^{-1-2ir}\right)^{-1}
	\,dr
	\\
	=
	4^{-s}
	\sum_{\substack{t\in\Z\\D=t^2-4n}} c_N^\circ(D) 
	\left\{\begin{array}{ll}
	\Phi\bigl(\frac{t^2}{4n}, s\bigr) & \text{if } D\neq 0,\\
	\frac{1}{2} 
	\sqrt{\frac{\pi}{n}}
	\frac{\Gamma(s+\frac12)}{\Gamma(s)} & \text{if } D=0
	\end{array}\right.
\end{multline}
%%%%%
and
\begin{multline}\label{eqn:levelNminus}
\sum_{\substack{\lambda\in\R_{>0}\\\lambda=\tfrac14+r^2}}
\Tr T_{-n}|_{\Anew(N, \lambda)} \Beta(s+ir, s-ir)
+
\mu(N)\frac{\sigma_1(n)}{\sqrt{n}} \Beta\!\left(s-\frac12,s+\frac12\right)
	\\
	+
	2\Lambda(N)
	\frac{1}{2\pi}\int_{-\infty}^\infty \Beta(s-ir, s+ir)
	\sigma_{-2ir}(n) n^{ir}
	\left(1-N^{-1-2ir}\right)^{-1}
	\,dr
	\\
	=
	n^s
	\sum_{\substack{t\in\Z\\D=t^2+4n}}
	\frac{c_N^\circ (D)}{D^s}
	, 
\end{multline}
where $c_N^\circ$ is as defined in \eqref{eqn:cN0def}. 
%%%%%
Both \eqref{eqn:levelNplus} and \eqref{eqn:levelNminus} continue
to meromorphic functions on $\C$ and are holomorphic for
$\Re(s)>\frac7{64}$, apart from simple poles of residue
$\mu(N)\frac{\sigma_1(n)}{\sqrt{n}}$ at $s=\tfrac12$.
\end{proposition}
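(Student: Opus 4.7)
The plan is to mirror the proof of Proposition~\ref{prop:specialization_N=1}, using Proposition~\ref{prop:tf2} as the input trace formula. Apply \eqref{eqn:tf2} with the test function $q(v) = [4(v+1)]^{-s}$; this reproduces $h(r) = \Beta(s+ir,s-ir)$ exactly as in the earlier proof, and gives $f(y) = n^s(y^2+4n)^{-s}$ in the $T_n$ case and $f(y) = n^s|y|^{-2s}$ in the $T_{-n}$ case. Isolating the spectral trace on one side moves $-\mu(N)\sigma_1(n)/\sqrt{n}\,h(i/2)$ (with $h(i/2) = \Beta(s-\tfrac12,s+\tfrac12)$) and the scattering sum $-2\Lambda(N)\sum_{a\mid n}\sum_{r=0}^\infty N^{-r}f(aN^r\mp n/(aN^r))$ to the LHS, matching the structure of \eqref{eqn:levelNplus} and \eqref{eqn:levelNminus}.

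The geometric sum $\sum_t c_N^\circ(D)\{\cdots\}$ on the right is handled case by case, recycling the explicit evaluations from the proof of Proposition~\ref{prop:specialization_N=1}. When $D<0$, the incomplete-Beta identity used there produces $c_N^\circ(D)\cdot 4^{-s}\Phi(t^2/(4n),s)$; when $D>0$, the elementary identity $f(\sqrt{D}) = n^s t^{-2s} = 4^{-s}(t^2/(4n))^{-s}$ matches $4^{-s}\Phi$ directly. A notable simplification over level $1$ is that square discriminants $D=\ell^2$ require no extra $\Psi$ or von Mangoldt correction at level $N$, because $c_N^\circ(\ell^2) = \Lambda(N)/(\ell,N^\infty)$ already absorbs the appropriate arithmetic weight. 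The $D=0$ contribution $c_N^\circ(0)\int_\R(f(0)-f(y))/y^2\,dy$ is evaluated by the same substitution as in the level-$1$ case, and the $T_{-n}$ side is uniform in $t$ since $D=t^2+4n>0$ always and $f(\sqrt{D}) = (n/D)^s$.

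The one genuinely new calculation is the evaluation of the scattering sum. The algebraic identity $(x\mp n/x)^2\pm 4n = (x\pm n/x)^2$ (applied with $x = aN^r$) collapses $f(aN^r\mp n/(aN^r))$ to $4^{-s}\cosh(w)^{-2s} = [2\cosh(w)]^{-2s}$ in both sign cases, with $w = \log(aN^r/\sqrt{n})$. Fourier inversion of the defining integral for $h$ yields $[2\cosh(w)]^{-2s} = \tfrac1{2\pi}\int_\R h(r')e^{-2ir'w}\,dr'$. Interchanging sum and integral and then evaluating the geometric series $\sum_{r\ge 0}N^{-r(1+2ir')} = (1-N^{-1-2ir'})^{-1}$ together with the divisor sum $\sum_{a\mid n}(a/\sqrt{n})^{-2ir'} = n^{ir'}\sigma_{-2ir'}(n)$, one recovers exactly the scattering integral on the LHS of \eqref{eqn:levelNplus} and \eqref{eqn:levelNminus}.

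For the analytic properties, Stirling's formula gives $|\Beta(s+ir,s-ir)|\ll e^{-\pi|r|}$ uniformly on compacta avoiding the poles, which combined with Weyl's law for $\Anew(N,\lambda)$ and the Kim--Sarnak bound $|\Im r_j|\le 7/64$ for exceptional eigenvalues shows the spectral sum is holomorphic for $\Re(s)>\tfrac7{64}$. The scattering integral is manifestly holomorphic for $\Re(s)>0$, since $1-N^{-1-2ir}$ vanishes only on the line $\Im r = \tfrac12$, safely off the real contour. The simple pole at $s=\tfrac12$ with residue $\mu(N)\sigma_1(n)/\sqrt{n}$ arises from $\Gamma(s-\tfrac12)$ inside $\Beta(s-\tfrac12,s+\tfrac12)$. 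Meromorphic continuation to all of $\C$ is obtained by shifting the contour of the scattering integral past its poles in $r$ and tracking the residues, exactly as in the closing paragraphs of the proof of Proposition~\ref{prop:specialization_N=1}; this bookkeeping is the main obstacle but should be routine.
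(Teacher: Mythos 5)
Your proposal is correct and follows essentially the same route as the paper: specialize the newform trace formula \eqref{eqn:tf2} with $q(v)=[4(v+1)]^{-s}$, recycle the level-$1$ evaluations (\eqref{e:fy_|D|}, \eqref{e:f0fy}) for the $D<0$, $D>0$ and $D=0$ terms, and turn the $\Lambda(N)$-scattering sum into the Beta-integral --- the paper does this via \cite[6.422(1)]{GR} on a vertical line $\Re(u)=\sigma$, and your Fourier inversion of $h$ is the same integral representation written directly on the real contour --- before invoking Stirling, the Weyl law and Kim--Sarnak for the continuation and the $\Re(s)>\frac7{64}$ holomorphy, exactly as in Proposition~\ref{prop:specialization_N=1}. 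The only place to be careful when you ``use the same substitution as in the level-$1$ case'' is the $D=0$ term (present only for square $n$): \eqref{eqn:tf2} carries $\int_\R\frac{f(0)-f(y)}{y^2}\,dy$ while \eqref{e:f0fy} evaluates $\int_0^\infty$, and the constant $\frac12\sqrt{\pi/n}\,\Gamma(s+\frac12)/\Gamma(s)$ recorded in \eqref{eqn:levelNplus} is the value of the half-line integral, so track that factor of $2$ explicitly.
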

%%%%%
\begin{proof}
With $q(v) = [4(1+v)]^{-s}$, 
as in the proof of Proposition~\ref{prop:specialization_N=1}, 
we have
$h(r) = \Beta(s+ir, s-ir)$.
%%%
Hence, by \eqref{eqn:tf2}, the left-hand sides of \eqref{eqn:levelNminus}
and \eqref{eqn:levelNplus} are 
$$
	\sum_{\lambda\in\R_{>0}}\Tr T_{\pm n}|_{\Anew(N, \lambda)}
	h\Bigl(\sqrt{\lambda-\tfrac14}\Bigr)
	=
	\sum_{\substack{\lambda\in\R_{>0}\\\lambda=\tfrac14+r^2}}
	\Tr T_n|_{\Anew(N, \lambda)} \Beta(s+ir, s-ir)
	.
$$
%%%

Let us first consider \eqref{eqn:levelNminus}. 
Then $f(y) = n^s |y|^{-2s}$, and $D=t^2+4n>0$, so that
$$
	f(\sqrt{D}) = n^s D^{-s}
	.
$$
%%%%%

Now we consider \eqref{eqn:levelNplus}, in which case
$f(y) = n^2(y^2+4n)^{-s}$ and $D=t^2-4n$. 
For $D=0$, 
$$
	\int_\R \frac{f(0)-f(y)}{y^2} \, dy
	=
	\frac{4^{-s} \sqrt{\pi}}{\sqrt{4n}} \frac{\Gamma(s+\frac12)}{\Gamma(s)}
	, 
$$
by \eqref{e:f0fy}. 
%%%%%
For $D<0$, 
$$
	\frac{\sqrt{|D|}}{\pi} \int_{\R} \frac{f(y)}{|D|+y^2} \, dy
	=
	4^{-s} I_{\frac{t^2}{4n}}\!\left(s, \frac{1}{2}\right)
	\left(\frac{t^2}{4n}\right)^{-s}, 
$$
by \eqref{e:fy_|D|}. 
%%%%%

For $D=t^2\pm 4n$ with $n\geq 1$, 
$$
	f\!\left(aN^r - \frac{n}{aN^r}\right)
	=
	n^s\left(aN^r+\frac{n}{aN^r}\right)^{-2s}.
$$
By \cite[6.422(1)]{GR}, for $1-\Re(s) < \sigma< \Re(s)$, we have
$$
	n^s\left(aN^r+\frac{n}{aN^r}\right)^{-2s}
	=
	\frac{1}{2\pi i}\int_{\Re(u) = \sigma} \Beta(s-u, s+u)
	\left(\frac{n}{a^2N^{2r}}\right)^u \,du
	,
$$
so that
$$
	\sum_{\substack{a\in\Z_{>0}\\a\mid n}} \sum_{r=0}^\infty 
	N^{-r} 
	f\!\left(aN^r-\frac{n}{aN^r}\right)
	=
	\frac{1}{2\pi i}\int_{\Re(u) = \sigma} \Beta(s-u, s+u)
	\sigma_{-2u}(n) n^{u}
	\left(1-N^{-1-2u}\right)^{-1}
	\,du
	.
$$
%%%%%
This integral and the sum over $\lambda$ have meromorphic continuation to $s\in \C$,
by similar arguments to those of Proposition~\ref{prop:specialization_N=1}.
Finally, the fact that the sum over $\lambda$ is holomorphic for
$\Re(s)>\frac7{64}$ follows from the best known bound towards the
Selberg eigenvalue conjecture, due to Kim and Sarnak \cite{ks}.
\end{proof}

%%%%%%%%%%
\section{Proofs of the main results}\label{sec:applications}
\subsection{Proof of Theorem~\ref{thm:main1}}

%: MIN %%%%%%%%%%%%%%%
By \eqref{eqn:level1minus} in Proposition ~\ref{prop:specialization_N=1}, 
the series given in (1) can be written as 
$$
	\sum_{\substack{t\in\Z\\\sqrt{t^2+4n}\notin\Z}}
	\frac{L(1, \psi_{t^2+4n})}{(t^2+4n)^s}
	=
	n^{-s} \left(F_1(s) + F_2(s)+F_3(s)\right)
	, 
$$
%%%
where
$$
	F_1(s) 
	= 
	\sum_{\substack{\lambda\in\R_{>0}\\\lambda=\frac{1}{4}+r^2}}
	\Tr T_{-n}|_{\A(1, \lambda)} \Beta(s+ir, s-ir)
	+
	\frac{\sigma_1(n)}{\sqrt{n}} \Beta\!\left(s-\frac{1}{2}, s+\frac{1}{2}\right)
	, 
$$
%%%
$$
	F_2(s)
	=
	-
	\frac{1}{4\pi} \int_{\R} \Beta(s-ir, s+ir)\frac{\sigma_{2ir}(n)}{n^{ir}} 
	\frac{\phi'}{\phi}\!\left(\frac{1}{2}+ir\right) dr
	- \frac{\sigma_0(n)}{4} \Beta(s, s)
$$
%%%
and 
$$
	F_3(s) 
	=
	-\sum_{\substack{t\in\Z\\t^2+4n=\ell^2, \ell\in \Z_{>0}}}
	\frac{\sum_{m\mid \ell} \Lambda(m) (1-m^{-1})
	+\frac12\left(\psi\!\left(s+\frac{1}{2}\right)-\psi\!\left(s\right)\right)}
	{\ell^{2s}}
	. 
$$
%%%%%

The meromorphic continuation of $F_1(s)$ and $F_2(s)$ was shown in the
proof of Proposition~\ref{prop:specialization_N=1}. In particular,
$F_1(s)$ has simple poles at $s=-m+\frac12$ and $s=-m\pm ir$ for
$m\in\Z_{\ge0}$, while $F_2(s)$ has simple poles at $s=-m+\frac{\rho-1}2$
for $m\in\Z_{\ge0}$ and $\rho$ a zero or pole of $\zeta^*(s)$.  Finally,
the series $F_3(s)$ has finitely many terms and is entire apart from
simple poles for $s\in-\frac12\Z_{\geq 0}$.  Moreover, one can check
that the poles of $F_2(s)$ and $F_3(s)$ at $s=0$ cancel out, so the only
poles of \eqref{eqn:level1series} for $\Re(s)\ge0$ are at $s=\frac12$
and $s=\pm ir$, with residues $\sigma_{-1}(n)$ and $n^{\mp ir}\Tr
T_{-n}|_{\A(1,\frac14+r^2)}$, respectively.  This proves (1).
%%%%%%%%%%%%%%%

%%%%%%%%%%
Let $N$ be a prime and $n$ be a positive integer such that $\left(\frac{-4n}{N}\right)=-1$. 
%Since $\left(\frac{-4n}{N}\right) = -1$, 
Then $N\nmid t^2+4n = d\ell^2=D$ and 
$$
	c_N^\circ(D) = \left(\psi_D(N)-1\right) L(1, \psi_D)
	= \left(\left(\frac{t^2+4n}{N}\right)-1\right) L(1, \psi_{t^2+4n}). 
$$
Combining \eqref{eqn:levelNminus} in Proposition \ref{prop:specialization_N>1}
 and \eqref{eqn:level1minus} in Proposition \ref{prop:specialization_N=1}, 
we see that the series
$$
	\sum_{\substack{t\in\Z\\\sqrt{t^2+4n}\notin\Z}}
	\frac{c_N^\circ(t^2+4n)}{(t^2+4n)^s}
	+
	\sum_{\substack{t\in\Z\\\sqrt{t^2+4n}\notin\Z}}
	\frac{L(1, \psi_{t^2+4n})}{(t^2+4n)^s}
	=
	\sum_{\substack{t\in\Z\\\sqrt{t^2+4n}\notin\Z}}
	\frac{L(1, \psi_{t^2+4n}) \left(\frac{t^2+4n}{N}\right)}
	{(t^2+4n)^s}
$$
has meromorphic continuation to $\C$. This proves (2). 

Finally, we turn to (3). If $N=2$ then there are no primes
$n$ satisfying $\left(\frac{-4n}{N}\right)=-1$. However, the Selberg
eigenvalue conjecture is true for $\Gamma_0(2)$ \cite{huxley}, so (3)
is vacuously true in this case. Henceforth we will assume that $N>2$.

If the Selberg conjecture holds for $\Gamma_0(N)$ then, since it also
holds for $\Gamma_0(1)$,
the first terms of
both \eqref{eqn:level1minus} and \eqref{eqn:levelNminus} are holomorphic
for $\Re(s) > 0$, and the second terms cancel.
In the other direction, let $\{f_j\}_{j=1}^\infty$ be a complete
sequence of arithmetically normalized Hecke--Maass newforms on
$\Gamma_0(N)\backslash\HH$, with parities $\epsilon_j\in\{0,1\}$,
Laplace eigenvalues $\frac14+r_j^2$ and Hecke eigenvalues $\lambda_j(n)$.
We need the following lemma.
\begin{lemma}\label{lem:nonvanishing}
Let $J$ be a finite set of positive integers and let $c_j\in \C^\times$ be given for each $j\in J$. 
Then there is a prime number $n$ such that
$\left(\frac{-4n}{N}\right)=-1$ and
$$
\sum_{j\in J}c_j\lambda_j(n)\ne0.
$$
\end{lemma}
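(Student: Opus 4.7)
The plan is a second-moment / Rankin--Selberg argument by contradiction. Let $\chi$ denote the Legendre symbol modulo $N$, so the condition $\left(\frac{-4n}{N}\right) = -1$ is equivalent to $\chi(n) = \epsilon$, where $\epsilon := -\chi(-1) \in \{\pm 1\}$. Writing $a(n) := \sum_{j \in J} c_j \lambda_j(n)$, I assume for contradiction that $a(p) = 0$ for every prime $p$ with $\chi(p) = \epsilon$, and study the Dirichlet series
\[
T(s) := \sum_{\substack{p \text{ prime}\\ \chi(p) = \epsilon}} \frac{|a(p)|^2}{p^s},
\]
which is then identically zero.

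Expanding $|a(p)|^2 = \sum_{j,j'} c_j \overline{c_{j'}} \lambda_j(p) \overline{\lambda_{j'}(p)}$ and using $\mathbf{1}_{\chi(p) = \epsilon} = \tfrac{1}{2}(1 + \epsilon\chi(p))$ for $p \nmid N$, I split $T(s)$ into an untwisted and a $\chi$-twisted piece. The untwisted prime sum $\sum_p \lambda_j(p) \overline{\lambda_{j'}(p)} p^{-s}$ equals $\log L(s, f_j \times \overline{f_{j'}}) + O(1)$, which by strong multiplicity one for newforms of the fixed squarefree level $N$ blows up like $\log\frac{1}{s-1}$ when $j = j'$ and stays bounded when $j \ne j'$. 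The twisted prime sum $\sum_p \chi(p)\lambda_j(p)\overline{\lambda_{j'}(p)} p^{-s}$ is $\log L(s, f_j \times \overline{f_{j'}} \otimes \chi) + O(1)$; for $j \ne j'$ this $L$-function is entire, and for $j = j'$ it factors as $L(s, \chi) \cdot L(s, \mathrm{Sym}^2 f_j \otimes \chi)$, both of which are entire at $s = 1$ (the first because $\chi$ is a nontrivial Dirichlet character, the second by Shimura and Gelbart--Jacquet, since $\chi$ is quadratic and nontrivial).

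Combining, as $s \to 1^+$ the twisted contribution to $T(s)$ remains bounded, while the untwisted contribution is $\tfrac{1}{2}\bigl(\sum_{j \in J} |c_j|^2\bigr)\log\frac{1}{s-1} + O(1)$, which has strictly positive order since every $c_j \ne 0$. Hence $T(s) \to +\infty$ as $s \to 1^+$, contradicting $T \equiv 0$, and establishing the existence of the desired prime.

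The main obstacle is securing the analytic input for the twisted symmetric-square $L$-function $L(s, \mathrm{Sym}^2 f_j \otimes \chi)$ at $s = 1$, where one must invoke the automorphy (or at least holomorphy) results of Shimura and Gelbart--Jacquet; once that is in place, everything else is routine Dirichlet-series bookkeeping. An alternative would be to adapt a Serre-style argument using Galois representations over primes in the specified coset mod $N$, but since the $f_j$ are Maass forms without associated Galois representations in general, the analytic Rankin--Selberg route seems the most direct.
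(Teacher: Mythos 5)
Your strategy is at bottom the same as the paper's: form $S_p=\sum_{j\in J}c_j\lambda_j(p)$, average $|S_p|^2$ over primes in the coset $\chi(p)=\epsilon$ by writing $\mathbf{1}_{\chi(p)=\epsilon}=\tfrac12\bigl(1+\epsilon\chi(p)\bigr)$, and play the untwisted diagonal (which is positive) against the $\chi$-twisted terms (which must be shown to be negligible). Whether one does this with $T(s)$ as $s\to1^+$ or, as the paper does, with partial sums $\sum_{p\le x}$ and the Rankin--Selberg asymptotic is immaterial. The genuine gap is in your justification of the key step, the boundedness of the twisted terms. You assert that for $j=j'$ the factor $L(s,\mathrm{Sym}^2 f_j\otimes\chi)$ is holomorphic at $s=1$ ``by Shimura and Gelbart--Jacquet, since $\chi$ is quadratic and nontrivial.'' That is not a theorem, and in fact the logic is inverted: quadratic characters are precisely the dangerous case. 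In general $L(s,f\times\overline{g}\otimes\chi)$ has a pole at $s=1$ exactly when $g\otimes\chi\cong f$, and for $f=g$ this happens exactly when $f$ is dihedral attached to the quadratic field cut out by $\chi$, in which case $\mathrm{Sym}^2 f$ contains $\chi$ as an isobaric constituent and $L(s,\mathrm{Sym}^2 f\otimes\chi)$ \emph{does} have a pole at $s=1$ (e.g.\ Maass forms induced from unramified Hecke characters of a real quadratic field). The same issue is hidden in your off-diagonal claim: for $j\ne j'$ entirety of $L(s,f_j\times\overline{f_{j'}}\otimes\chi)$ requires $f_{j'}\otimes\chi\not\cong f_j$, not merely $f_j\ne f_{j'}$, and you give no argument for this.

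What closes the gap is an input you never use: the hypotheses that the $f_j$ are \emph{newforms of prime level $N$} with trivial nebentypus and that $\chi$ is the primitive quadratic character mod $N$. Then the local component of each $f_{j'}$ at $N$ is an unramified twist of Steinberg, so $f_{j'}\otimes\chi$ is a newform of conductor $N^2$, hence cannot be isomorphic to any $f_j$ of level $N$; equivalently, no dihedral self-twists by $\chi$ can occur in this family. This is exactly the observation the paper makes (``$\left(\frac{p}{N}\right)\lambda_j(p)$ is the $p$th Fourier coefficient of a newform of level $N^2$, which is therefore distinct from $f_k$ for every $k$''), and it is the point where the restriction to prime level enters. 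Once you insert this conductor argument, all the twisted Rankin--Selberg factors are holomorphic and nonvanishing at $s=1$, your $s\to1^+$ bookkeeping is correct, and the proof is essentially the paper's.
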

\begin{proof}
The main tool is the Rankin--Selberg method, from which it follows
that if $f$ and $g$ are normalized Hecke--Maass newforms (of
possibly different levels) with Fourier coefficients $\lambda_f(n)$
and $\lambda_g(n)$, respectively, then
\begin{equation}\label{eqn:rs}
	\lim_{x\to\infty}\frac{1}{\pi(x)}\sum_{p\le x}
	\lambda_f(p)\overline{\lambda_g(p)}
	=
	\begin{cases}
	1&\mbox{if }f=g,\\
	0&\mbox{if }f\ne g
	,
	\end{cases}
\end{equation}
where the sum runs through all prime numbers $p\le x$.

For any prime $p$, put $S_p=\sum_{j\in J}c_j\lambda_j(p)$.
For large $x\ge N$, we consider the sum
$$
	\begin{aligned}
	\sum_{\substack{p\le x\\\left(\frac{-4p}{N}\right)=-1}}
	|S_p|^2
	&=\frac12\sum_{p\le x}|S_p|^2 -\frac12\left(\frac{-4}{N}\right)
	\sum_{p\le x}\left(\frac{p}{N}\right)|S_p|^2
	-\frac12 |S_N|^2.
	\end{aligned}
$$
Opening up the first sum, we have
$$
	\sum_{p\le x}|S_p|^2
	=
	\sum_{j,k\in J}c_j\overline{c_k}
	\sum_{p\le x}\lambda_j(p)\overline{\lambda_k(p)}.
$$
By \eqref{eqn:rs}, the inner sum is $o(\pi(x))$ if $j\ne k$ and
$(1+o(1))\pi(x)$ otherwise; thus, in total, the first sum is
$\left(\sum_{j\in J}|c_j|^2+o(1)\right)\pi(x)$.

Expanding the second sum in the same way, we obtain
$$
	\sum_{j,k\in J}c_j\overline{c_k}
	\sum_{p\le x}\left(\frac{p}{N}\right)\lambda_j(p)\overline{\lambda_k(p)}.
$$
Since $f_j$ is a newform of prime level $N$, 
$\left(\frac{p}{N}\right)\lambda_j(p)$ is the $p$th Fourier coefficient
of a newform of level $N^2$ (which is therefore distinct from $f_k$ for
every $k$). Thus, \eqref{eqn:rs} implies that the
second sum is $o(\pi(x))$.

Finally $\frac{1}{2}|S_N|^2$ is bounded independent of $x$. 
Putting these together, we have
$$
	\sum_{\substack{p\le x\\\left(\frac{-4p}{N}\right)=-1}}|S_p|^2
	=
	\left(\frac12\sum_{j\in J} |c_j|^2+o(1)\right)\pi(x)
	\quad\mbox{as }x\to\infty.
$$
Since $c_j\neq 0$, the last line is
positive for large enough $x$,
and thus there exists $p$ satisfying
$\left(\frac{-4p}{N}\right)=-1$ and $S_p\ne 0$, as required.
\end{proof}

To conclude the proof, suppose that the Selberg conjecture is false for
$\Gamma_0(N)$.  Let $\lambda_\text{min}\in(0,\frac14)$ be the smallest
non-zero eigenvalue, and put
$J=\{j\in\Z_{>0}:\frac14+r_j^2=\lambda_\text{min}\}$,
$c_j=(-1)^{\epsilon_j}$.
Then Lemma~\ref{lem:nonvanishing} implies that
there exists a prime $n$ such that $\left(\frac{-4n}{N}\right)=-1$ and
$$
	0\ne\sum_{j\in J}(-1)^{\epsilon_j}\lambda_j(n)
	=
	\Tr T_{-n}|_{\A(N,\lambda_\text{min})}.
$$
Thus, for this choice of $n$, \eqref{eqn:quadtwist} has a pole
at $s=\sqrt{\frac14-\lambda_\text{min}}>0$.
\qed

\begin{remark}
Using effective estimates in the Rankin--Selberg method, one could give
an upper bound for the $n$ produced by Lemma~\ref{lem:nonvanishing}, so
part (3) of Theorem~\ref{thm:main1} could be strengthened to an
equivalence with finitely many series. Alternatively, using known
results from functoriality would enable equivalences with thinner
infinite sequences
of $n$; for instance, if $N\equiv3\pmod*{4}$, then using functoriality
of the $k$th symmetric powers for $k\le 4$, we may take $n$ to be the
fourth power of a prime.
\end{remark}

%%%%%%%%%%
\subsection{Proof of Theorem~\ref{thm:main2}}
Define $E_N(s)=\prod_{p\mid N}(1-p^{-s})$.
We multiply \eqref{eqn:levelNminus} by $n^{-s} \cdot \Gamma_\R(2s)\zeta_N^*(4s)/E_N(2s+1)$ and sum
over square values of $n$ co-prime to $N$. The result can be expressed
in the form $L_1+L_2+L_3=R$, where
\begin{align*}
L_1&=\mu(N)\frac{\Gamma_\R(2s)\zeta_N^*(4s)}{E_N(2s+1)}
\frac{\Gamma_\R(2s-1)\Gamma_\R(2s+1)}{\Gamma_\R(4s)}
\sum_{\substack{n\in\Z_{>0}\\(n,N)=1}}\frac{\sigma_1(n^2)}{n^{2s+1}},\\
L_2&=\frac{\Gamma_\R(2s)\zeta_N^*(4s)}{E_N(2s+1)}
\sum_{j=1}^\infty(-1)^{\epsilon_j}
\frac{\Gamma_\R(2s-2ir_j)\Gamma_\R(2s+2ir_j)}{\Gamma_\R(4s)}
\sum_{\substack{n\in\Z_{>0}\\(n,N)=1}}
\frac{\lambda_j(n^2)}{n^{2s}},\\
L_3&=\frac{\Gamma_\R(2s) \zeta_N^*(4s)}{E_N(2s+1)} 2\Lambda(N) \frac{1}{2\pi} \int_{-\infty}^\infty \Beta (s-ir, s+ir) 
	\sum_{\substack{n\in\Z_{>0}\\(n, N)=1}} \frac{\sigma_{-2ir}(n^2)(n^2)^{ir}}{n^{2s}} 
	\left(1-N^{-1-2ir}\right)^{-1} \, dr,
\end{align*}
and
$$
	R 
	= 
	\frac{\Gamma_\R(2s)\zeta_N^*(4s)}{E_N(2s+1)}
	\sum_{\substack{t\in\Z, n\in\Z_{>0}\\(n, N)=1}}
	\frac{c_N^\circ (t^2+4n^2)}{(t^2+4n^2)^s}
	.
$$
By Atkin--Lehner theory, for any $p\mid N$,
we have $\lambda_j(p^{2k})=p^{-k}$. Thus,
$$
\sum_{\substack{n\in\Z_{>0}\\(n,N)=1}}\frac{\lambda_j(n^2)}{n^{2s}}
=E_N(2s+1)\sum_{n=1}^\infty\frac{\lambda_j(n^2)}{n^{2s}},
$$
so that
$$
L_2=\sum_{j=1}^\infty(-1)^{\epsilon_j}L^*(2s,\Sym^2{f_j}).
$$
Similarly,
$$
\sum_{\substack{n\in\Z_{>0}\\(n,N)=1}}\frac{\sigma_1(n^2)}{n^{2s+1}}
=\frac{E_N(2s)E_N(2s+1)E_N(2s-1)}{E_N(4s)}
\frac{\zeta(2s)\zeta(2s+1)\zeta(2s-1)}{\zeta(4s)},
$$
so that
\begin{align*}
L_1&=\mu(N)N^{2s}E_N(2s)E_N(2s-1)
\zeta^*(2s)\zeta^*(2s+1)\zeta^*(2s-1)\\
&=\sqrt{N}\zeta^*(2s)\zeta_N^*(-2s)\zeta_N^*(2s-1).
\end{align*}
%%%%%

Turning to the right-hand side, we define
$$
r^{(M)}(D)=\tfrac12\#\bigl\{(x,y)\in\Z^2:D=x^2+4y^2,\;(y,M)=1\bigr\},
$$
for $M\in\Z_{>0}$, so that
$$
	R=\frac{\Gamma_\R(2s)\zeta_N^*(4s)}{E_N(2s+1)}
\sum_{D=1}^\infty\frac{c_N^\circ(D)r^{(N)}(D)}{D^s}.
$$
Note that $c_N^\circ(m^2D)=m^{-1}c_N^\circ(D)$ for any $m\mid N^\infty$
and $D\in\DD$.  Hence, expanding the factor of $E_N(2s+1)^{-1}$
and writing $r^{(N)}(x)=0$ if $x$ is not a positive integer,
we get
\begin{equation}\label{eqn:R1R2first}
\begin{aligned}
R&=\Gamma_\R(2s)\zeta_N^*(4s)
\sum_{m\mid N^\infty}m^{-2s-1}
\sum_{D=1}^\infty\frac{c_N^\circ(D)r^{(N)}(D)}{D^s}\\
&=\Gamma_\R(2s)\zeta_N^*(4s)
\sum_{m\mid N^\infty}
\sum_{D=1}^\infty\frac{c_N^\circ(m^2D)r^{(N)}(D)}{(m^2D)^s}\\
&=\Gamma_\R(2s)\zeta_N^*(4s)
\sum_{m\mid N^\infty}
\sum_{D=1}^\infty\frac{c_N^\circ(D)r^{(N)}(Dm^{-2})}{D^s}\\
&=\Gamma_\R(2s)\zeta_N^*(4s)
\sum_{D=1}^\infty\frac{c_N^\circ(D)}{D^s}
\sum_{m\mid N^\infty}r^{(N)}(Dm^{-2}).
\end{aligned}
\end{equation}
The inner sum is evaluated by the following three lemmas.
\begin{lemma}\label{lem:rNell2}
For positive integers $M,\ell$,
$$
r^{(M)}(\ell^2)=\#\bigl\{(a,n)\in\Z_{>0}^2:a\mid n^2,
\;(n,M)=1,\;a+\tfrac{n^2}a=\ell\bigr\}+
\begin{cases}
1&\mbox{if }M=1,\\
0&\mbox{if }M>1.
\end{cases}
$$
\end{lemma}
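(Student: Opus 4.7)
The plan is to evaluate the count defining $r^{(M)}(\ell^2)$ directly by factoring. Writing the equation $\ell^2 = x^2 + 4y^2$ as $(\ell-x)(\ell+x) = (2y)^2$ and noting that $\ell-x$ and $\ell+x$ have the same parity with even product, both factors must be even. Setting $u = \frac{\ell-x}{2}$ and $v = \frac{\ell+x}{2}$ gives an obvious bijection
\[
\{(x,y) \in \Z^2 : \ell^2 = x^2 + 4y^2\} \longleftrightarrow \{(u,v,y) \in \Z^3 : u+v = \ell,\; uv = y^2\},
\]
under which $(y, M) = 1$ is preserved.

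Next, I would split the right-hand count according to whether $y = 0$. If $y = 0$ then $uv = 0$, which forces $(u, v) \in \{(0, \ell), (\ell, 0)\}$, giving exactly two triples. These satisfy $(y, M) = 1$ precisely when $M = 1$ (using the convention $\gcd(0, M) = M$), so after dividing by the factor of $\frac12$ in the definition of $r^{(M)}$, the $y = 0$ case contributes $1$ when $M = 1$ and $0$ otherwise, matching the boundary term in the lemma.

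For $y \ne 0$, the identities $u + v = \ell > 0$ and $uv = y^2 > 0$ force $u, v \in \Z_{>0}$. Parametrize such triples by $(a, n) := (u, |y|) \in \Z_{>0}^2$; then $v = n^2/a$, so $a \mid n^2$ and $a + n^2/a = \ell$, while $(y, M) = 1$ is exactly $(n, M) = 1$. Conversely, each such $(a, n)$ yields exactly two triples $(u, v, y) = (a, n^2/a, \pm n)$, so the overall factor of $\frac12$ converts this $2$-to-$1$ count into a bijection. Summing the two contributions gives the claimed formula.

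The argument is essentially bookkeeping; the only subtle point is keeping straight the conventions in the edge case $y = 0$, where the factor of $\frac12$ in the definition of $r^{(M)}$ combines with the pair $(\pm\ell, 0)$ and the gcd convention $\gcd(0, M) = M$ to produce exactly the indicator of $M = 1$.
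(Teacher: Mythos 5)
Your proof is correct and follows essentially the same route as the paper's: split off the $y=0$ solutions (which contribute the $\delta_{M=1}$ term via the convention $\gcd(0,M)=M$ and the factor $\tfrac12$), and for $y\neq 0$ factor $\ell^2-x^2=4y^2$ to parametrize solutions by divisors $a\mid n^2$ with $a+\tfrac{n^2}{a}=\ell$, the $\tfrac12$ absorbing the sign of $y$. Your substitution $u=\tfrac{\ell-x}{2}$, $v=\tfrac{\ell+x}{2}$ is just a repackaging of the paper's observation that $x\equiv\ell\pmod*{2}$ and $|x|<\ell$.
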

\begin{proof}
Put $\delta_M=1$ if $M=1$ and $\delta_M=0$ otherwise. Then we have
\begin{align*}
r^{(M)}(\ell^2)
&=\tfrac12\#\bigl\{(x,y)\in\Z^2:\ell^2=x^2+4y^2,\;(y,M)=1\bigr\}\\
&=\delta_M+\#\{(x,n)\in\Z\times\Z_{>0}:\ell^2=x^2+4n^2,\;(n,M)=1\}.
\end{align*}
Note that if $\ell^2=x^2+4n^2$ with $n>0$ then $x\equiv\ell\pmod*{2}$
and $|x|<\ell$, so this becomes
\begin{align*}
&\delta_M+\#\bigl\{(x,n)\in\Z\times\Z_{>0}:
\tfrac{\ell+x}2\cdot\tfrac{\ell-x}2=n^2,
\;(n,M)=1,\;x\equiv\ell\pmod*{2},|x|<\ell\bigr\}\\
&=\delta_M+\#\bigl\{(a,n)\in\Z_{>0}^2:a(\ell-a)=n^2,\;(n,M)=1\bigr\}\\
&=\delta_M+\#\bigl\{(a,n)\in\Z_{>0}^2:a\mid n^2,
\;(n,M)=1,\;a+\tfrac{n^2}a=\ell\bigr\}.
\end{align*}
\end{proof}
%%%%%
\begin{lemma}\label{lem:rNtelescope}
Let $M,D$ be positive integers with $M$ squarefree,
and let $p$ be a prime divisor of $M$. Then
$$
\sum_{k=0}^\infty r^{(M)}(Dp^{-2k})=r^{(M/p)}(D)
-\tfrac12\#\bigl\{(x,y)\in\Z^2:
Dp^{-2\lfloor\ord_p(D)/2\rfloor}=x^2+4y^2,\;p\mid y\bigr\}.
$$
In particular, if $\psi_d(p)\ne1$,
where $d$ denotes the discriminant of $\Q(\sqrt{D})$, then
$$
\sum_{k=0}^\infty r^{(M)}(Dp^{-2k})=r^{(M/p)}(D).
$$
\end{lemma}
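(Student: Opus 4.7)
The plan is to reinterpret each summand $r^{(M)}(Dp^{-2k})$ as a count of representations of the original $D=X^2+4Y^2$ via the substitution $(X,Y)=(p^kx,p^ky)$. The crucial observation is that whenever $\ord_p(Y)=k$ and $p^{2k}\mid D$, one automatically has $\ord_p(X)\geq k$: otherwise the $p$-adic valuation of $X^2$ would be strictly less than that of $4Y^2$ (namely $2k$ for odd $p$, or $2k+2$ for $p=2$), forcing $\ord_p(D)<2k$, contrary to hypothesis. Hence the substitution yields a bijection
$$
\bigl\{(x,y):Dp^{-2k}=x^2+4y^2,\ (y,M)=1\bigr\}
\longleftrightarrow
\bigl\{(X,Y):D=X^2+4Y^2,\ \ord_p(Y)=k,\ (Y,M/p)=1\bigr\},
$$
since $p\mid M$ lets us split the condition $(y,M)=1$ into $\ord_p(Y)=k$ (the piece at $p$) and $(Y,M/p)=1$ (the coprimality to the remaining primes of $M$).

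Summing over $k=0,\ldots,K$ with $K=\lfloor\ord_p(D)/2\rfloor$ (higher $k$ give $Dp^{-2k}\notin\Z_{>0}$ and so contribute nothing), the left-hand side of the lemma equals half the number of pairs $(X,Y)$ with $D=X^2+4Y^2$, $(Y,M/p)=1$, and $\ord_p(Y)\leq K$. Comparing to $r^{(M/p)}(D)$, the deficit is the count of pairs satisfying $Y=0$ or $\ord_p(Y)\geq K+1$. Applying the inverse substitution $(X,Y)=(p^Kx,p^Ky)$, which is again well-defined since $p^{2K}\mid D$ forces $p^K\mid X$, translates the condition ``$Y=0$ or $\ord_p(Y)\geq K+1$'' into ``$p\mid y$'' in the rescaled equation $Dp^{-2K}=x^2+4y^2$, producing the stated error term.

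For the ``in particular'' statement with $D=d\ell^2$ and $\psi_d(p)\neq 1$, I would show directly that the error set is empty. Since $\psi_d$ is the Kronecker symbol of a fundamental discriminant, $\ord_p(d)\leq 1$ and hence $\ord_p(Dp^{-2K})\in\{0,1\}$. If this valuation equals $1$ (the ramified case $p\mid d$), then $p\mid y$ combined with $p\mid Dp^{-2K}$ forces $p\mid x$, whence $p^2\mid Dp^{-2K}$, contradicting $\ord_p(Dp^{-2K})=1$. If the valuation equals $0$ and $p$ is odd, the congruence $x^2\equiv Dp^{-2K}\pmod*{p}$ (obtained by reducing the equation modulo $p$ under $p\mid y$) has no solution since $\bigl(\tfrac{Dp^{-2K}}{p}\bigr)=\psi_d(p)\neq 1$; for $p=2$ the analogous obstruction comes from a residue computation modulo $8$.

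The main obstacle will be the $p$-adic valuation step underlying the bijection, particularly at $p=2$, where the factor of $4$ shifts all valuation thresholds by $2$ and makes the matching of cases fussier. A secondary subtlety is verifying that the inverse substitution in Step~2 faithfully packs both the $Y=0$ case and the $\ord_p(Y)\geq K+1$ case into the single condition $p\mid y$ on the rescaled equation, without double-counting and while respecting the constraint $(Y,M/p)=1$.
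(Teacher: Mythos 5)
Your proposal is correct and is essentially the paper's own argument: both hinge on the observation that $p\mid y$ together with $p^2\mid n$ forces $p\mid x$ in $n=x^2+4y^2$, so representations rescale under $p$ — the paper packages this as the recursion $r^{(M)}(n)=r^{(M/p)}(n)-r^{(M/p)}(np^{-2})$ and telescopes, while you partition the representations counted by $r^{(M/p)}(D)$ according to $\ord_p(Y)$, which is the same count organized in one step, and your case analysis for the ``in particular'' part (valuation $1$ forces divisibility by $p^2$; odd $p$ via $\left(\tfrac{d}{p}\right)=-1$; $p=2$ via congruences modulo $8$) coincides with the paper's. The one caveat you flag — that the leftover set should strictly carry the extra condition $(y,M/p)=1$ — is shared by the paper's statement and proof, and is harmless since the lemma is only applied with $M=N$ prime.
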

\begin{proof}
If $p^2\mid n$ then
\begin{align*}
r^{(M)}(n)&=\tfrac12\#\bigl\{(x,y)\in\Z^2:n=x^2+4y^2,\;(y,M)=1\bigr\}\\
&=r^{(M/p)}(n)
-\tfrac12\#\bigl\{(x,y)\in\Z^2:n=x^2+4y^2,\;(y,M/p)=1,\;p\mid y\bigr\}\\
&=r^{(M/p)}(n)-r^{(M/p)}(np^{-2}).
\end{align*}
Now put $e=\lfloor{\ord_p(D)/2}\rfloor$. Then for $k<e$ we
may apply the above with $n=Dp^{-2k}$:
$$
r^{(M)}(Dp^{-2k})=r^{(M/p)}(Dp^{-2k})-r^{(M/p)}(Dp^{-2(k+1)}).
$$
Hence we have the telescoping sum
$$
\sum_{k=0}^\infty r^{(M)}(Dp^{-2k})
=\sum_{k=0}^e r^{(M)}(Dp^{-2k})=r^{(M/p)}(D)-r^{(M/p)}(Dp^{-2e})
+r^{(M)}(Dp^{-2e}).
$$
The first conclusion follows on noting that
$$
r^{(M/p)}(Dp^{-2e})-r^{(M)}(Dp^{-2e})
=\tfrac12\#\bigl\{(x,y)\in\Z^2:Dp^{-2e}=x^2+4y^2,\;p\mid y\bigr\}.
$$

As for the second, if $\ord_p(D)$ is odd then $\ord_p(Dp^{-2e})=1$.
On the other hand, if $p\mid y$ then $x^2+4y^2$ is either invertible
mod $p$ or divisible by $p^2$. Hence
$$
\tfrac12\#\bigl\{(x,y)\in\Z^2:Dp^{-2e}=x^2+4y^2,\;p\mid y\bigr\}=0
$$
in this case.

Suppose now that $\ord_p(D)$ is even and $\psi_d(p)\ne1$.
If $p$ is odd then it follows that
$\left(\frac{Dp^{-2e}}{p}\right)=-1$, so again
$Dp^{-2e}=x^2+4y^2$ is not solvable with $p\mid y$.
For $p=2$, we distinguish between even and odd values of $d$.
If $d$ is even then we have
$Dp^{-2e}=\frac14dm^2$, where $m$ is odd;
hence, the equation $Dp^{-2e}=x^2+4y^2$ is not solvable, since
$\frac14dm^2\equiv2$ or $3\pmod*{4}$, while
$x^2+4y^2\equiv0$ or $1\pmod*{4}$.
If $d$ is odd then $d\equiv5\pmod*{8}$ since $\psi_d(p)\ne1$,
so $Dp^{-2e}=dm^2\equiv5\pmod*{8}$. Again we find that
$Dp^{-2e}=x^2+4y^2$ is not solvable with $y$ even.
\end{proof}
\begin{corollary}\label{cor:rNtelescope}
For any positive integer $D$,
$$
c_N^\circ(D)\sum_{m\mid N^\infty}r^{(N)}(Dm^{-2})
=c_N^\circ(D)r(D)-\delta_{N,D},
$$
where
$$
\delta_{N,D}=\begin{cases}
\frac{\Lambda(N)}{(\ell,N^\infty)}\bigl(1+
\#\{(a,n)\in\Z_{>0}^2:a\mid n^2,\;N\mid n,\;a+\frac{n^2}{a}
=\frac{\ell}{(\ell,N^\infty)}\}\bigr)
&\mbox{if }D=\ell^2,\\
0&\mbox{otherwise}.
\end{cases}
$$
\end{corollary}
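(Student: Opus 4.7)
The plan is to split on whether $D$ is a square discriminant and then appeal to Lemma~\ref{lem:rNtelescope}, with the evaluation of the ``error'' term in the square case handled by the same parametrization used in Lemma~\ref{lem:rNell2}. Preliminarily, if $D\not\in\DD$ then $c_N^\circ(D)=0$ by convention and $r(D)=0$ since $x^2+4y^2\in\{0,1\}\pmod*{4}$, so both sides vanish trivially; I may therefore assume $D=d\ell^2\in\DD$ with $d$ a fundamental discriminant.

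For $d\ne 1$ (the non-square case) we have $\delta_{N,D}=0$, so the goal reduces to showing $c_N^\circ(D)\sum_{m\mid N^\infty}r^{(N)}(Dm^{-2})=c_N^\circ(D)r(D)$. If $c_N^\circ(D)=c_N(D)=0$ there is nothing to prove; otherwise \eqref{eqn:cNdef} forces $\psi_{D/m_0^2}(p)\ne 1$ for every $p\mid N$, where $m_0=(N^\infty,\ell)$, and since $\gcd(\ell/m_0,N)=1$ this simplifies to $\psi_d(p)\ne 1$ for all $p\mid N$. Writing $N=p_1\cdots p_k$, I would iterate the second conclusion of Lemma~\ref{lem:rNtelescope} one prime at a time, collapsing $\sum_{m\mid N^\infty}r^{(N)}(Dm^{-2})$ to $r^{(1)}(D)=r(D)$. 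The step that needs genuine care is verifying that the hypothesis $\psi_d(p_i)\ne 1$ survives each reduction: at intermediate stages the argument of $r^{(\cdot)}$ is $Dm'^{-2}$ for some $m'\mid N^\infty$, and as long as this is a positive non-square integer the associated quadratic field is still $\Q(\sqrt{d})$, while otherwise the term vanishes by the convention that $r^{(M)}$ is zero off $\Z_{>0}$.

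For $d=1$ (so $D=\ell^2$), I use that squarefreeness of $N$ forces $\Lambda(N)\ne 0$ only when $N$ is prime; when $N$ is composite, both $c_N^\circ(\ell^2)$ and $\delta_{N,\ell^2}$ vanish and the identity is trivial. When $N$ is prime, the first conclusion of Lemma~\ref{lem:rNtelescope}, together with $\lfloor\ord_N(\ell^2)/2\rfloor=\ord_N(\ell)$ and the shorthand $\ell'=\ell/(\ell,N^\infty)$, gives
\[
\sum_{k\ge 0}r^{(N)}(\ell^2N^{-2k})=r(\ell^2)-\tfrac12\#\bigl\{(x,y)\in\Z^2:(\ell')^2=x^2+4y^2,\;N\mid y\bigr\}.
\]
Separating the $y=0$ contribution (the two points $x=\pm\ell'$, giving $1$ after the factor of $\tfrac12$) from the $y\ne 0$ contribution and applying the substitution $(x,y)\mapsto(a,n)=((\ell'-x)/2,|y|)$ as in the proof of Lemma~\ref{lem:rNell2}, but with $N\mid n$ in place of $(n,M)=1$, the error becomes $1+\#\{(a,n)\in\Z_{>0}^2:a\mid n^2,\;N\mid n,\;a+n^2/a=\ell'\}$. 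Multiplying through by $c_N^\circ(\ell^2)=\Lambda(N)/(\ell,N^\infty)$ reproduces $\delta_{N,\ell^2}$ exactly, closing the argument.
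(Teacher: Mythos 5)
Your argument is correct and is essentially the paper's own proof: the same case split (non-discriminant, non-square with $\psi_d(p)\ne1$ forced by $c_N^\circ(D)\ne0$, and square $D$ with $N$ necessarily prime), with the non-square case handled by iterating the second conclusion of Lemma~\ref{lem:rNtelescope} and the square case by its first conclusion with $M=p=N$. The only cosmetic difference is in evaluating $\tfrac12\#\{(x,y):(\ell')^2=x^2+4y^2,\;N\mid y\}$: the paper writes it as $r^{(1)}((\ell')^2)-r^{(N)}((\ell')^2)$ and applies Lemma~\ref{lem:rNell2} twice, whereas you rerun that lemma's parametrization directly with $N\mid n$, which amounts to the same computation.
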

\begin{proof}
If $D\equiv2$ or $3\pmod*{4}$ then both sides vanish. If $D=d\ell^2\in\DD$
with $d\ne1$ then $c_N^\circ(D)=0$ unless $\psi_d(p)\ne1$ for every
$p\mid N$; in that case, we may apply the second conclusion of
Lemma~\ref{lem:rNtelescope} inductively to see that
$\sum_{m\mid N^\infty}r^{(N)}(Dm^{-2})=r(D)$.
Finally, if $D=\ell^2$ is a square then again both sides are $0$ unless $N$ is
prime, and in that case we apply the first conclusion of
Lemma~\ref{lem:rNtelescope} with $M=p=N$. The stated formula for
$\delta_{N,D}$ follows from Lemma~\ref{lem:rNell2}, since
\begin{align*}
\tfrac12&\#\bigl\{(x,y)\in\Z^2:\ell^2N^{-2\ord_N(\ell)}=x^2+4y^2,
\;N\mid y\bigr\}\\
&=r^{(1)}(\ell^2N^{-2\ord_N(\ell)})-r^{(N)}(\ell^2N^{-2\ord_N(\ell)})\\
&=1+\#\bigl\{(a,n)\in\Z_{>0}^2:a\mid n^2,\;N\mid n,\;a+\tfrac{n^2}{a}
=\tfrac{\ell}{(\ell,N^\infty)}\bigr\}.
\end{align*}
\end{proof}

We apply this to \eqref{eqn:R1R2first}. When $N$ is composite we have
$c_N^\circ=c_N$, so that $R=\sum_{D=1}^\infty c_N(D)r(D)D^{-s}$,
which completes the proof in that case.  Henceforth we assume that
$N$ is prime; in particular, $E_N(s)=1-N^{-s}$. Thus, by
Corollary~\ref{cor:rNtelescope},
\begin{equation}\label{eqn:R1R2second}
\begin{aligned}
&R-\Gamma_\R(2s)\zeta_N^*(4s)
\sum_{D=1}^\infty\frac{c_N^\circ(D)r(D)}{D^s}\\
&=-\Lambda(N)\Gamma_\R(2s)\zeta_N^*(4s)\sum_{\ell=1}^\infty
\frac{1+\#\{(a,n)\in\Z_{>0}^2:a\mid n^2,\;N\mid n,\;a+\tfrac{n^2}{a}
=\tfrac{\ell}{(\ell,N^\infty)}\}}
{(\ell,N^\infty)\ell^{2s}}\\
&=-\Lambda(N)\Gamma_\R(2s)\zeta_N^*(4s)\sum_{k=0}^\infty N^{-k(2s+1)}
\sum_{\substack{\ell_0\in\Z_{>0}\\(\ell_0,N)=1}}
\frac{1+\#\{(a,n)\in\Z_{>0}^2:
a\mid n^2,\;N\mid n,\;a+\tfrac{n^2}{a}=\ell_0\}}
{\ell_0^{2s}}\\
&=-\frac{\Lambda(N)\Gamma_\R(2s)\zeta_N^*(4s)}{E_N(2s+1)}
\Biggl(E_N(2s)\zeta(2s)
+\sum_{\substack{n\in\Z_{>0}\\N\mid n}}
\sum_{\substack{a\mid n^2\\(a+n^2/a,N)=1}}
\left(a+\frac{n^2}{a}\right)^{-2s}
\Biggr).
\end{aligned}
\end{equation}
We write $n=n_0N^r$ for $r>0$ and $(n_0,N)=1$.
Then the condition $(a+n^2/a,N)=1$ is satisfied if and only if
$\ord_N(a)\in\{0,2r\}$, so we have
\begin{align*}
\sum_{\substack{n\in\Z_{>0}\\N\mid n}}
&\sum_{\substack{a\mid n^2\\(a+n^2/a,N)=1}}
\left(a+\frac{n^2}{a}\right)^{-2s}
=\sum_{r=1}^\infty\sum_{\substack{n_0\in\Z_{>0}\\(n_0,N)=1}}
\sum_{a\mid n_0^2}\Biggl(
\left(a+\frac{n_0^2N^{2r}}{a}\right)^{-2s}
+\left(aN^{2r}+\frac{n_0^2}{a}\right)^{-2s}\Biggr)\\
&=2\sum_{r=1}^\infty\sum_{\substack{n_0\in\Z_{>0}\\(n_0,N)=1}}
\sum_{a\mid n_0^2}
\left(aN^{2r}+\frac{n_0^2}{a}\right)^{-2s}
=2\sum_{\substack{r\in\Z\\r<0}}\sum_{\substack{n_0\in\Z_{>0}\\(n_0,N)=1}}
N^{4rs}\sum_{a\mid n_0^2}
\left(a+\frac{n_0^2N^{2r}}{a}\right)^{-2s}.
\end{align*}
Substituting this into \eqref{eqn:R1R2second} and combining
with $R_3$, we obtain
\begin{equation}\label{eqn:R1R2R3}
\begin{aligned}
R-\Gamma_\R(2s)\zeta_N^*(4s)
\sum_{D=1}^\infty\frac{c_N^\circ(D)r(D)}{D^s}
&=
-\Lambda(N)\zeta_N^*(4s)\frac{E_N(2s)\zeta^*(2s)}{E_N(2s+1)}\\
&-\frac{2\Lambda(N)\Gamma_\R(2s)\zeta_N^*(4s)}{E_N(2s+1)}
\sum_{\substack{n\in\Z_{>0}\\(n,N)=1}}\sum_{a\mid n^2}
\sum_{\substack{r\in\Z, \\ r<0}}\left(a+\frac{n^2N^{2r}}{a}\right)^{-2s}
N^{4rs}
, 
%\begin{cases}
%N^{r(2s-1)}&\mbox{if }r\ge0,\\
%N^{4rs}&\mbox{if }r<0,
%\end{cases}
\end{aligned}
\end{equation}
which we write as $R_1'+R_2'$. 
%%%%%

Next, we convert $R_2'$ into an integral by
applying \cite[6.422(3)]{GR}:
\begin{equation}\label{eqn:an2aint}
\left(a+\frac{n^2N^{2r}}{a}\right)^{-2s}
=\frac1{2\pi i}\int_{\Re(u)=0}
\frac{\Gamma_\R(2s+2u)\Gamma_\R(2s-2u)}{\Gamma_\R(4s)}
a^{-2u}(nN^r)^{2(u-s)}\,du.
\end{equation}
We have
\begin{align*}
\sum_{\substack{r\in\Z,\\ r<0}}N^{2r(u-s)} N^{4rs}
%&
%\begin{cases}
%N^{r(2s-1)}&\mbox{if }r\ge0,\\
%N^{4rs}&\mbox{if }r<0
%\end{cases}\\
&=
%+
\frac{N^{-2u-2s}}{1-N^{-2u-2s}}
=\frac{E_N(2s+1)}{E_N(1-2u)E_N(2u+2s)}
-\frac1{1-N^{2u-1}},
\end{align*}
and
$$
\sum_{\substack{n\in\Z_{>0}\\(n,N)=1}}
\sum_{a\mid n^2}a^{-2u}n^{2u-2s}
=\frac{E_N(2s)E_N(2s+2u)E_N(2s-2u)}{E_N(4s)}
\frac{\zeta(2s)\zeta(2s+2u)\zeta(2s-2u)}{\zeta(4s)},
$$
so that
$$
R_2'-L_3=-2\Lambda(N)N^{2s}E_N(2s)\zeta^*(2s)
\int_{\Re(u)=0}\frac{E_N(2s-2u)}{E_N(1-2u)}
\zeta^*(2s+2u)\zeta^*(2s-2u)\,\frac{du}{2\pi i}.
$$
Now, since
$$
\frac{E_N(2s-2u)}{E_N(1-2u)}=
N^{1-2s}\left(1-\frac{E_N(1-2s)}{E_N(1-2u)}\right),
$$
this equals
$$
-2N\Lambda(N)E_N(2s)\zeta^*(2s)
\bigl(A(s)-E_N(1-2s)B_N(s)\bigr),
$$
where
$$
A(s)=\frac1{2\pi i}\int_{\Re(u)=0}
\zeta^*(2s+2u)\zeta^*(2s-2u)\,du
$$
and
$$
B_N(s)=\frac1{2\pi i}\int_{\Re(u)=0}
\frac{\zeta^*(2s+2u)\zeta^*(2s-2u)}{E_N(1-2u)}\,du.
$$

Next, to compute the contribution from $c_N-c_N^\circ$, we have the
following:
\begin{lemma}
$$
\zeta^*(4s)\sum_{n=1}^\infty\frac{r(\ell^2)}{\ell^{2s}}
=\zeta(2s)(A(s)+\zeta^*(4s)).
$$
\end{lemma}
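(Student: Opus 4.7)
The plan is to apply Lemma~\ref{lem:rNell2} with $M=1$ to expand $r(\ell^2)$, convert the resulting sum into a Mellin--Barnes integral via the formula \eqref{eqn:an2aint} (specialized to $N^r=1$), and then identify the integral with $A(s)/\zeta^*(4s)$ using a standard Dirichlet series computation.

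First I would write
$$
\sum_{\ell=1}^\infty\frac{r(\ell^2)}{\ell^{2s}}
=\zeta(2s)+\sum_{n=1}^\infty\sum_{a\mid n^2}\left(a+\frac{n^2}{a}\right)^{-2s},
$$
where the $\zeta(2s)$ comes from the $\delta_M=1$ term in Lemma~\ref{lem:rNell2}. Next, using \eqref{eqn:an2aint} with $r=0$, I express the inner double sum as
$$
\frac1{2\pi i}\int_{\Re(u)=0}
\frac{\Gamma_\R(2s+2u)\Gamma_\R(2s-2u)}{\Gamma_\R(4s)}
\sum_{n=1}^\infty\sum_{a\mid n^2}a^{-2u}n^{2u-2s}\,du,
$$
provided $\Re(s)$ is large enough to justify the interchange of summation and integration.

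The key computational step is the Euler product identity
$$
\sum_{n=1}^\infty\frac{\sigma_{-2u}(n^2)}{n^{2s-2u}}
=\frac{\zeta(2s)\zeta(2s+2u)\zeta(2s-2u)}{\zeta(4s)},
$$
which follows at each prime $p$ from summing the geometric series
$\sum_k\sigma_{-2u}(p^{2k})p^{-k(2s-2u)}$ and using the factorization
$1-p^{-2(2s+2u-2u)}=(1-p^{-(2s-2u)-2u})(1+p^{-(2s-2u)-2u})$ — a routine but essential verification. Substituting this back, the gamma factors combine with the zeta factors to give $\zeta^*$'s, and the $\zeta(4s)$ combines with $\Gamma_\R(4s)$ to yield $\zeta^*(4s)$ in the denominator, so the expression collapses to
$$
\sum_{n=1}^\infty\sum_{a\mid n^2}\left(a+\frac{n^2}{a}\right)^{-2s}
=\frac{\zeta(2s)}{\zeta^*(4s)}\cdot\frac1{2\pi i}\int_{\Re(u)=0}\zeta^*(2s+2u)\zeta^*(2s-2u)\,du
=\frac{\zeta(2s)A(s)}{\zeta^*(4s)}.
$$

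Multiplying both sides of the expression for $\sum r(\ell^2)/\ell^{2s}$ by $\zeta^*(4s)$ then yields the claim. The only real obstacle is the Euler product manipulation above; the rest is bookkeeping with the gamma factors and moving the contour appropriately. A brief remark at the end would address meromorphic continuation in $s$: both sides inherit the expected analytic behaviour from the $\zeta^*$-factors appearing, so the identity, once established in a right half-plane, persists throughout $\C$.
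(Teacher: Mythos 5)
Your argument is correct and follows the paper's own proof essentially verbatim: expand $r(\ell^2)$ via Lemma~\ref{lem:rNell2} with $M=1$, apply \eqref{eqn:an2aint} with $r=0$, and insert the identity $\sum_{n\ge1}\sigma_{-2u}(n^2)n^{2(u-s)}=\zeta(2s)\zeta(2s+2u)\zeta(2s-2u)/\zeta(4s)$ so that the integral collapses to $\zeta(2s)A(s)/\zeta^*(4s)$. The only cosmetic issue is that your sketch of the prime-by-prime verification is slightly garbled, but the identity is the standard one (Ramanujan's identity for $\sigma_a(n^2)$), which the paper likewise invokes without further justification.
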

\begin{proof}
By Lemma~\ref{lem:rNell2} with
$M=1$, we have
$$
\sum_{\ell=1}^\infty\frac{r(\ell^2)}{\ell^{2s}}
=\zeta(2s)+\sum_{n=1}^\infty\sum_{a\mid n^2}
\left(a+\frac{n^2}{a}\right)^{-2s}.
$$
On the other hand, by \eqref{eqn:an2aint} with $r=0$,
\begin{align*}
\sum_{n=1}^\infty&\sum_{a\mid n^2}
\left(a+\frac{n^2}{a}\right)^{-2s}
=\frac1{2\pi i}\int_{\Re(u)=0}
\frac{\Gamma_\R(2s+2u)\Gamma_\R(2s-2u)}{\Gamma_\R(4s)}
\sum_{n=1}^\infty\sigma_{-2u}(n^2)n^{2(u-s)}\,du\\
&=\frac1{2\pi i}\int_{\Re(u)=0}
\frac{\Gamma_\R(2s+2u)\Gamma_\R(2s-2u)}{\Gamma_\R(4s)}
\frac{\zeta(2s)\zeta(2s+2u)\zeta(2s-2u)}{\zeta(4s)}\,du
=\frac{\zeta(2s)}{\zeta^*(4s)}A(s).
\end{align*}
\end{proof}

Comparing the definitions of $c_N$ and $c_N^\circ$, we thus have
\begin{multline}\label{eqn:ABrhs}
	R-L_3
	-\Gamma_\R(2s)\zeta_N^*(4s)\sum_{D=1}^\infty
\frac{c_N(D)r(D)}{D^s}
%	\\
%	=
%	R_2'+R_3'+\frac{2N\Lambda(N)}{N+1}E_N^*(4s)\zeta^*(2s)
%(A(s)+\zeta^*(4s))
	\\
	=
	\Lambda(N)\zeta^*(2s)\biggl[
	2NA(s)\left(\frac{E_N^*(4s)}{N+1}-E_N(2s)\right)
	+
	2NB_N(s)E_N(2s)E_N(1-2s)
	\\
	\hspace{3cm}+\zeta_N^*(4s)
	\left(\frac{2N}{N+1}-\frac{E_N(2s)}{E_N(2s+1)}\right)
	\biggr].
\end{multline}
Note that, for prime $N$,
$$
\frac{E_N^*(4s)}{N+1}-E_N(2s)
=-\frac{N}{N+1}E_N(2s)E_N(1-2s)
$$
and
$$
\frac1{E_N(1-2u)}-\frac{N}{N+1}
=\frac1{N+1}\frac{E_N^*(4u)}{E_N(2u)E_N(1-2u)},
$$
so on making the substitution $u\mapsto\frac{u}2$,
\eqref{eqn:ABrhs} becomes
\begin{equation}\label{eqn:ABrhs2}
\frac{N\Lambda(N)}{N+1}\zeta^*(2s)\biggl[\IN(2s;0)
+\zeta_N^*(4s)\left(2-\frac{(N+1)E_N(2s)}{NE_N(2s+1)}\right)
\biggr].
\end{equation}

Shifting the contour of $\IN$ to $\Re(u)=-\sigma$, we pass poles at
$u=1-2s$ and $u=-2s$, with residues
$$
E_N^*(2-4s)\zeta^*(4s-1)=\zeta_N^*(2-4s)
$$
and
$$
-\frac{E_N^*(2s)E_N^*(1-2s)}{E_N^*(-2s)E_N^*(1+2s)}
E_N^*(-4s)\zeta^*(4s)
=\frac{E_N(2s-1)}{NE_N(2s+1)}\zeta_N^*(4s),
$$
respectively.
Finally, we have
$$
\frac{E_N(2s-1)}{NE_N(2s+1)}+2-\frac{(N+1)E_N(2s)}{NE_N(2s+1)}=1,
$$
so that \eqref{eqn:ABrhs2} is
$$
\frac{N\Lambda(N)}{N+1}\zeta^*(2s)\bigl(\IN(2s;-\sigma)
+\zeta_N^*(4s)+\zeta_N^*(2-4s)\bigr),
$$
as required.
%%%%%

The analytic continuation and functional equation of $L^*(2s, \Sym^2 f_j)$ were
proved by Gelbart and Jacquet \cite{GJ78} following ideas of Shimura \cite{Shi75}.
By Stirling's formula and the convexity bound, we have
$$
	L^*(2s, \Sym^2 f_j) \ll_{K, \varepsilon} e^{-(\pi-\varepsilon)|r_j|}, 
$$
uniformly for $s\in K$, for any compact subset $K\subset \C$ and $\varepsilon>0$.
Combining this with the Weyl-type estimate
$\#\{j:|r_j|\leq T\} \ll T^2$, we see that
the series $L_2$ converges absolutely to an entire function of $s$.
Similarly, $\IN(2s; \sigma)$ converges absolutely for all $s$ with $\Re(2s)<\sigma$
and satisfies a functional equation.
\qed

%%%%%%%%%%
\subsection{Proof of Theorem~\ref{thm:main3}}
Let 
$$
	F(s) 
	= 
	\sum_{\substack{0<D\in\DD\\\sqrt{D}\notin\Z}}\frac{L(1, \psi_D) r(D)}{D^s}. 
$$
It is straightforward to show that
$L(1,\psi_D)r(D)\ll_\varepsilon D^\varepsilon$, for
any $D\in\DD$ with $D>0$ and $\sqrt{D}\notin\Z$.
Using this estimate in \cite[\S{}II.1, Cor.~2.1]{tenenbaum},
for any $X\ge T\ge2$, we have
$$
\sum_{\substack{0<D\le X\\\sqrt{D}\notin\Z}}
L(1,\psi_D)r(D)=\frac1{2\pi i}\int_{\kappa-iT}^{\kappa+iT}
F(s)X^s\frac{ds}{s}
+O_\varepsilon\!\left(\frac{X^{1+\varepsilon}}{T}\right),
$$
where $\kappa=1+1/\log{X}$.

Our goal now is to shift the contour to $\Re(s)=\frac14$.
To that end, we will show
that $F(s)$ is analytic on $\{s\in\C:\Re(s)\ge\frac14\}$, apart from
poles at $s\in\{\frac12,1\}$, and satisfies the estimates
\begin{equation}\label{e:F14}
F(\tfrac14+it)\ll_\varepsilon(1+|t|)^{\frac94+\varepsilon}
\quad\mbox{and}\quad
\int_{-T}^T|F(\tfrac14+it)|\,dt
\ll_\varepsilon T^{\frac{11}4+\varepsilon}.
\end{equation}
Hence, by partial integration, we have
$$
\int_{\frac14-iT}^{\frac14+iT}F(s)X^s\frac{ds}{s}
\ll_\varepsilon X^{\frac14}T^{\frac74+\varepsilon}.
$$
By convexity, it follows from the first estimate in \eqref{e:F14} that
$$
F(\sigma\pm iT)\ll_\varepsilon T^{3(1-\sigma)+\varepsilon}
\quad\mbox{for }\sigma\in[\tfrac14,\kappa],
$$
so that
$$
\int_{\frac14\pm iT}^{\kappa\pm iT}F(s)X^s\frac{ds}{s}
\ll_\varepsilon X^{\frac14}T^{\frac54+\varepsilon}+XT^{-1+\varepsilon}.
$$
The pole at $s=\frac12$ contributes a residue of size
$O_\varepsilon(X^{\frac12+\varepsilon})$, and
from the pole at $s=1$ we get the main term, which turns out to be
$\frac12\frac{\zeta^*(2)\zeta^*(3)}{\pi^{-1}\zeta^*(4)}X$.
Hence, altogether we have
$$
	\sum_{\substack{D\leq X\\\sqrt{D}\notin\Z}} 
	L(1, \psi_D) r(D) 
	=
	\frac{\pi}{2}\frac{\zeta^*(2) \zeta^*(3)}{\zeta^*(4)} X
+O_\varepsilon\!\left(
X^{\frac14}T^{\frac74+\varepsilon}
+X^{\frac12+\varepsilon}
+\frac{X^{1+\varepsilon}}{T}\right)
$$
for any $\varepsilon>0$. Taking $T=X^{\frac3{11}}$ yields the desired
bound.

To prove the meromorphic continuation of $F(s)$ and the estimates
\eqref{e:F14}, we compute from \eqref{eqn:level1minus} that
\begin{multline}\label{e:F1s}
	F(s)
	=
	\sum_{j=1}^\infty
	(-1)^{\epsilon_j} 
	\frac{L^*(2s, \Sym^2 f_j)}{\zeta^*(4s)\Gamma_\R(2s)}
	+
	\frac{\zeta^*(2s) \zeta^*(2s-1) \zeta^*(2s+1)}{\zeta^*(4s)\Gamma_\R(2s)}
	\\
	-
	\frac{1}{4\pi} \int_{\R} 
	\frac{\zeta(2s)\zeta^*(2s-2ir) \zeta^*(2s+2ir)}{\zeta^*(4s)}
	\frac{\phi'}{\phi}\!\left(\frac{1}{2}+ir\right) dr
	-
	\frac{1}{4}\frac{\zeta^*(2s)^3}{\zeta^*(4s)\Gamma_\R(2s)}
	\\
	-
	\sum_{\ell=1}^\infty
	\Biggl[\sum_{m\mid\ell} \Lambda(m)(1-m^{-1})
	+\tfrac12\bigl(\psi(s+\tfrac12)-\psi(s)\bigr)\Biggr]
	\frac{r(\ell^2)-2}{\ell^{2s}}
	, 
\end{multline}
where 
$
	L^*(2s, \Sym^2 f_j) 
	= 
	\Gamma_\R(2s) \Gamma_\R(2s-2ir_j) \Gamma_\R(2s+2ir_j)
	\zeta(4s) 
	\sum_{n=1}^\infty \frac{\lambda_j(n^2)}{n^{2s}}
$.
We consider each term in turn.

First, by Stirling's formula, for $s=\frac14+it$ we have
$$
\frac{\Gamma_\R(2s-2ir)\Gamma_\R(2s+2ir)}{\Gamma_\R(4s)}
\ll(1+|t^2-r^2|)^{-\frac14}e^{-\pi\max(0,|r|-|t|)},
$$
uniformly for $r,t\in\R$. Moreover,
$1/\zeta(4s)\ll\log(1+|t|)$, and by the uniform convexity bound
\cite[Cor.~2]{harcos}, we have
$$
L(2s,\Sym^2{f_j})\ll_\varepsilon
\bigl((1+|t|)(1+|t^2-r_j^2|)\bigr)^{\frac14+\varepsilon}.
$$
From these estimates and the Weyl bound
$\#\{j:|r_j|\leq T\}\ll T^2$, we see that the sum over
$j$ in \eqref{e:F1s} is $O_\varepsilon\big((1+|t|)^{\frac94+\varepsilon}\big)$,
as claimed. Moreover, by Cauchy--Schwarz, we have
\begin{align*}
\left(\sum_{j=1}^\infty\int_{-T}^T\left|
\frac{L^*(\frac12+2it,\Sym^2f_j)}{\zeta^*(1+4it)\Gamma_\R(\frac12+2it)}
\right|dt\right)^2\ll&\log^2T
\sum_{j=1}^\infty\int_{-T}^T
(1+|t^2-r_j^2|)^{-\frac12}e^{-\pi\max(0,|r_j|-|t|)}\,dt\\
&\cdot
\sum_{j=1}^\infty\int_{-T}^T
|L(\tfrac12+2it,\Sym^2f_j)|^2e^{-\pi\max(0,|r_j|-|t|)}\,dt\\
&\hspace{-2cm}
\ll(T\log{T})^2
\sum_{j=1}^\infty\int_{-T}^T
|L(\tfrac12+2it,\Sym^2f_j)|^2e^{-\pi\max(0,|r_j|-|t|)}\,dt.
\end{align*}
By \cite[Thm.~2]{kss} and \cite[Cor.~C]{ramakrishnan-wang}, we have
$$
\int_{-T}^T|L(\tfrac12+2it,\Sym^2f_j)|^2\,dt
\ll_\varepsilon(1+r_j)^\varepsilon T^{\frac32}\log{T},
$$
and altogether this yields
$$
\sum_{j=1}^\infty\int_{-T}^T\left|
\frac{L^*(\frac12+2it,\Sym^2f_j)}{\zeta^*(1+4it)\Gamma_\R(\frac12+2it)}
\right|dt
\ll_\varepsilon T^{\frac{11}4+\varepsilon}.
$$

For all remaining terms we obtain a pointwise bound of at most
$O_\varepsilon\bigl((1+|t|)^{\frac54+\varepsilon}\bigr)$,
which suffices for both estimates in
\eqref{e:F14}. First, we have
$$
\frac{\zeta(2s) \zeta^*(2s-1) \zeta^*(2s+1)}{\zeta^*(4s)}
\ll_\varepsilon(1+|t|)^{\frac34+\varepsilon}.
$$
Next, by a similar analysis to the proof of Proposition~\ref{prop:tf1},
the second line of \eqref{e:F1s} can be written as
\begin{align*}
\frac12\zeta(2s)\frac{{\zeta^*}'}{\zeta^*}(2s+1)
&-\frac12\frac{\zeta^*(4s-1){\zeta^*}'(2s)}{\zeta^*(4s)\Gamma_\R(2s)}\\
&-\frac1{2\pi i}\int_{\Re(u)=\sigma}
\frac{\zeta(2s)\zeta^*(2s-2u) \zeta^*(2s+2u)}{\zeta^*(4s)}
\frac{{\zeta^*}'}{\zeta^*}(1+2u)\,du,
\end{align*}
for any $\sigma>\frac14$, and the integral is
$\zeta(2s)/\zeta^*(4s)$ times an analytic function for
$\Re(s)\in(\frac12-\sigma,\sigma)$.  The first two terms are analytic for
$\Re(s)\ge\frac14$ apart from a pole at $s=\frac12$, and by the
convexity bounds for $\zeta(\frac12+2it)$ and $\zeta'(\frac12+2it)$,
we see that they are $O_\varepsilon\big((1+|t|)^{\frac14+\varepsilon}\big)$
for $s=\frac14+it$. Taking $\sigma=\frac14+\varepsilon$ and writing
$s=\frac14+it$, $u=\frac14+\varepsilon+ir$, we find by a similar
analysis to the above that
$$
\frac{\zeta(2s)\zeta^*(2s-2u) \zeta^*(2s+2u)}{\zeta^*(4s)}
\frac{{\zeta^*}'}{\zeta^*}(1+2u)
\ll_\varepsilon(1+|t|)^{\frac14+\varepsilon}
(1+|t^2-r^2|)^\varepsilon
e^{-\pi\max(0,|r|-|t|)},
$$
so the integral is $O_\varepsilon\big((1+|t|)^{\frac54+\varepsilon}\big)$.

Turning to the third line of \eqref{e:F14}, note first that
$\sum_{m\mid\ell}\Lambda(m)=\log\ell$, and
$$
-\sum_{\ell=1}^\infty\frac{(r(\ell^2)-2)\log\ell}{\ell^{2s}}
=\frac12\frac{d}{ds}\left(
\frac{\zeta^2(2s)L(2s,\psi_{-4})}{\zeta(4s)}-2\zeta(2s)\right).
$$
Again using the convexity bound, this is
$O_\varepsilon\big((1+|t|)^{\frac34+\varepsilon}\big)$ for
$s=\frac14+it$. Similarly,
$$
-\frac12\bigl(\psi(s+\tfrac12)-\psi(s)\bigr)
\sum_{\ell=1}^\infty\frac{r(\ell^2)-2}{\ell^{2s}}
=-\frac12\bigl(\psi(s+\tfrac12)-\psi(s)\bigr)
\left(\frac{\zeta^2(2s)L(2s,\psi_{-4})}{\zeta(4s)}-2\zeta(2s)\right),
$$
and this is $O_\varepsilon\big((1+|t|)^{\frac14+\varepsilon}\big)$ for
$s=\frac14+it$. Finally, we have
$$
-2\sum_{\ell=1}^\infty\ell^{-2s}\sum_{m\mid\ell}\frac{\Lambda(m)}{m}
=2\zeta(2s)\frac{\zeta'}{\zeta}(2s+1),
$$
which is $O_\varepsilon\big((1+|t|)^{\frac14+\varepsilon}\big)$ for
$s=\frac14+it$, and
$$
\sum_{\ell=1}^\infty r(\ell^2)\ell^{-2s}\sum_{m\mid\ell}\frac{\Lambda(m)}{m}
=\frac{\zeta^2(2s)L(2s,\psi_{-4})}{\zeta(4s)}
\sum_p\frac{\log{p}}{p-1}
\left[1-\frac{E_p(p^{-2s-1})}{E_p(p^{-2s})}\right],
$$
where $E_p(T)=\frac{1+T}{(1-T)(1-\psi_{-4}(p)T)}$.
The sum over $p$ is analytic and bounded for
$\Re(s)\ge\frac14$, so the last line is
$O_\varepsilon\big((1+|t|)^{\frac34+\varepsilon}\big)$
for $s=\frac14+it$.
\qed

\bibliographystyle{alpha}
\bibliography{reference_Selberg}

\begin{thebibliography}{KSS06}

\bibitem[CL01]{conrey-li}
J.~Brian Conrey and Xian-Jin Li.
\newblock On the trace of {H}ecke operators for {M}aass forms for congruence
  subgroups.
\newblock {\em Forum Math.}, 13(4):447--484, 2001.

\bibitem[GJ78]{GJ78}
Stephen Gelbart and Herv{\'e} Jacquet.
\newblock A relation between automorphic representations of {${\rm GL}(2)$} and
  {${\rm GL}(3)$}.
\newblock {\em Ann. Sci. \'Ecole Norm. Sup. (4)}, 11(4):471--542, 1978.

\bibitem[GR07]{GR}
I.~S. Gradshteyn and I.~M. Ryzhik.
\newblock {\em Table of integrals, series, and products}.
\newblock Elsevier/Academic Press, Amsterdam, seventh edition, 2007.
\newblock Translated from the Russian, Translation edited and with a preface by
  Alan Jeffrey and Daniel Zwillinger, With one CD-ROM (Windows, Macintosh and
  UNIX).

\bibitem[Har02]{harcos}
Gergely Harcos.
\newblock Uniform approximate functional equation for principal
  {$L$}-functions.
\newblock {\em Int. Math. Res. Not.}, (18):923--932, 2002.

\bibitem[Has13]{hashimoto}
Yasufumi Hashimoto.
\newblock Asymptotic formulas for class number sums of indefinite binary
  quadratic forms on arithmetic progressions.
\newblock {\em Int. J. Number Theory}, 9(1):27--51, 2013.

\bibitem[Hej83]{hejhal2}
Dennis~A. Hejhal.
\newblock {\em The {S}elberg trace formula for {${\rm PSL}(2,\,{\bf R})$}.
  {V}ol. 2}, volume 1001 of {\em Lecture Notes in Mathematics}.
\newblock Springer-Verlag, Berlin, 1983.

\bibitem[Hux85]{huxley}
M.~N. Huxley.
\newblock Introduction to {K}loostermania.
\newblock In {\em Elementary and analytic theory of numbers ({W}arsaw, 1982)},
  volume~17 of {\em Banach Center Publ.}, pages 217--306. PWN, Warsaw, 1985.

\bibitem[Kim03]{ks}
Henry~H. Kim.
\newblock Functoriality for the exterior square of {${\rm GL}_4$} and the
  symmetric fourth of {${\rm GL}_2$}.
\newblock {\em J. Amer. Math. Soc.}, 16(1):139--183, 2003.
\newblock With appendix 1 by Dinakar Ramakrishnan and appendix 2 by Kim and
  Peter Sarnak.

\bibitem[KSS06]{kss}
W.~Kohnen, A.~Sankaranarayanan, and J.~Sengupta.
\newblock The quadratic mean of automorphic {$L$}-functions.
\newblock In {\em Automorphic forms and zeta functions}, pages 262--279. World
  Sci. Publ., Hackensack, NJ, 2006.

\bibitem[Li05]{li2}
Xian-Jin Li.
\newblock On the trace of {H}ecke operators for {M}aass forms for congruence
  subgroups. {II}.
\newblock {\em Forum Math.}, 17(1):1--30, 2005.

\bibitem[Li08]{li3}
Xian-Jin Li.
\newblock On exceptional eigenvalues of the {L}aplacian for {$\Gamma_0(N)$}.
\newblock {\em Proc. Amer. Math. Soc.}, 136(6):1945--1953, 2008.

\bibitem[Mot92]{MR1189505}
Y{\=o}ichi Motohashi.
\newblock Spectral mean values of {M}aass waveform {$L$}-functions.
\newblock {\em J. Number Theory}, 42(3):258--284, 1992.

\bibitem[Rau09]{raulf}
Nicole Raulf.
\newblock Asymptotics of class numbers for progressions and for fundamental
  discriminants.
\newblock {\em Forum Math.}, 21(2):221--257, 2009.

\bibitem[RW03]{ramakrishnan-wang}
Dinakar Ramakrishnan and Song Wang.
\newblock On the exceptional zeros of {R}ankin-{S}elberg {$L$}-functions.
\newblock {\em Compositio Math.}, 135(2):211--244, 2003.

\bibitem[RW11]{rw}
Robert~C. Rhoades and Matthias Waldherr.
\newblock A {M}aass lifting of {$\Theta^3$} and class numbers of real and
  imaginary quadratic fields.
\newblock {\em Math. Res. Lett.}, 18(5):1001--1012, 2011.

\bibitem[Sar82]{sarnak1}
Peter Sarnak.
\newblock Class numbers of indefinite binary quadratic forms.
\newblock {\em J. Number Theory}, 15(2):229--247, 1982.

\bibitem[Sar85]{sarnak2}
Peter~C. Sarnak.
\newblock Class numbers of indefinite binary quadratic forms. {II}.
\newblock {\em J. Number Theory}, 21(3):333--346, 1985.

\bibitem[Shi75]{Shi75}
Goro Shimura.
\newblock On the holomorphy of certain {D}irichlet series.
\newblock {\em Proc. London Math. Soc. (3)}, 31(1):79--98, 1975.

\bibitem[Str16]{str}
Andreas Str\"ombergsson.
\newblock Explicit trace formulas for {H}ecke operators.
\newblock preprint, 2016.

\bibitem[Ten15]{tenenbaum}
G{\'e}rald Tenenbaum.
\newblock {\em Introduction to analytic and probabilistic number theory},
  volume 163 of {\em Graduate Studies in Mathematics}.
\newblock American Mathematical Society, Providence, RI, third edition, 2015.
\newblock Translated from the 2008 French edition by Patrick D. F. Ion.

\end{thebibliography}
\end{document}